\documentclass[11pt]{amsart}
\usepackage{amssymb,latexsym}
\usepackage{pstricks,pst-grad,pst-char,epsfig, graphicx}
\newtheorem{theorem}{Theorem}[section]

\newtheorem{lemma}[theorem]{Lemma}
\newtheorem{remark}[theorem]{Remark}

\newcommand{\Z}{{\mathbb{Z}}}

\begin{document}

\title[Donaldson-Thomas invariants]
{Local Donaldson-Thomas invariants of Blowups of surfaces }

\author[Jianxun Hu ]{Jianxun Hu$^*$   }
\address{Department of Mathematics\\ Sun Yat-sen
University\\Guangzhou 510275\\ P. R. China}
\email{stsjxhu@mail.sysu.edu.cn}
\thanks{${}^*$Partially supported by the NSFC Grant 10825105 }

 \subjclass{Primary: 14D20; Secondary: 14J30.}
\keywords{Donaldson-Thomas invariants, local surfaces, blowup.}

\begin{abstract}
Using the degeneration formula for Doanldson-Thomas invariants, we
proved a formula for the change of Donaldson-Thomas invariants of
local surfaces under blowing up along points.

\end{abstract}

\maketitle
\date{}

\section{\bf Introduction}
Given a smooth projective Calabi-Yau $3$-fold $X$, the moduli
space of stable sheaves on $X$ has virtual dimension zero.
Donaldson and Thomas \cite{D-T} defined the holomorphic Casson
invariant of $X$ which essentially counts the number of stable
bundles on $X$. However, the moduli space has positive dimension
and is singular in general. Making use of virtual cycle technique
(see \cite{B-F} and \cite{L-T}), Thomas in \cite{Thomas} showed
that one can define a  virtual moduli cycle for some $X$ including
Calabi-Yau and Fano $3$-folds. As a consequence, one can define
Donaldson-type invariants of $X$ which are deformation invariant.
Donaldson-Thomas invariants provide  a new  vehicle to study the
geometry and other aspects of higher-dimensional varieties. It is
important to understand these invariants.

It is well-known \cite{MNOP1, MNOP2}that there is a correspondence
between Donaldson-Thomas invariants and Gromov-Witten invariants.
Both invariants are deformation independent. On the side of
Gromov-Witten invariants, Li and Ruan in \cite{L-R} first
established the degeneration formula of Gromov-Witten invariants
in symplectic geometry. J. Li proved an algebraic geometry version
of this degeneration formula. In \cite{Hu1, Hu2}, the author
studied the change of Gromov-Witten invariants under the blowup.
The author \cite{Hu3} also studies the change of local
Gromov-Witten invariants of Fano surfaces under the blowup.  In
the birational geometry of $3$-folds, we have blowups and flops
which are semistable degenerations. In \cite{HL} the authors
studied how Donaldson-Thomas invariants change under the blowup at
a point, some flops and extremal transitions.

Local del Pezzo surface used to play an important role in physics.
Local de Pezzo surfaces are usually associated to phase
transitions in the K\"ahler moduli space of various string,
M-theory, and F-theory compactifications.Non-toric del Pezzo
surfaces seem to be related to exotic physics in four, five and
six dimensions such as nontrivial fixed points of the
renormalization group \cite{GMS} without lagrangian description
and strongly interacting noncritical strings. There is also a
relation between non-toric del Pezzo surfaces and string junctions
in F-theory \cite{KMV,LMW}. Certain problems of physical interest
such as counting of BPS states reduce to questions related to
topological strings on local del Pezzo surfaces. In this paper, we
will use the degeneration formula for Donaldson-Thomas invariants
to study the change of the Donaldson-Thomas invariants of the
local surface under the blowup.

Let $S$ be a smooth surface and $K_S$ its canonical bundle. Denote
by $Y_S= {\mathbb P}(K_S\oplus {\mathcal O})$ the projective
bundle completion of the total space of the canonical bundle
$K_S$. The Donaldson-Thomas theory of $Y_S$ is well defined in
every rank. Let $\gamma_i\in H^*(Y_S)$, $i=1,\cdots, r$. Denote by
$\tilde{\tau}_{k_i}(\gamma_i)$ the associated descendent fields in
Donaldson-Thomas theory, which is defined in \cite{MNOP2}. For
$\beta\in H_2(Y_S,{\mathbb Z})$ and an integer $n\in {\mathbb Z}$,
denote by $\langle \tilde{\tau}_{k_1}(\gamma_1),\cdots,
\tilde{\tau}_{k_r}(\gamma_{r})\rangle^{Y_S}_{n,\beta}$ the
descendent Donaldson-Thomas invariant of $Y_S$. Denote by
$Z'_{DT}(S;q)_\beta$ the reduced partition function for the
Donaldson-Thomas theory of the local Calabi-Yau geometry of $S$.

Denote by $p:\tilde{S}\longrightarrow S$ the natural projection of
the blow-up of $S$ at a smooth point $p_0\in S$. Let $\beta\in
H_2(S,\mathbb{Z})$ and $p!(\beta) = PDp^*PD(\beta)\in
H_2(\tilde{S}, \mathbb{Z})$. In \cite{Hu3}, we use the
degeneration formula to study the change of local Gromov-Witten
invariants under the blowup of the Fano surfaces. Similarly, we
observed that the Donaldson-Thomas invariants of $Y_S$ of degree
$\beta$ is equal to the Donaldson-Thomas invariants of
$Y_{\tilde{S}}$ of degree $p!(\beta)$.

 We can find a sequence of birational threefolds all of
whose invariants are equal. In fact, the birational threefolds are
the projective completion $Y_S$ of $K_S$, the blow-up
$\tilde{Y}_S$ of $Y_S$ along the fiber over $p_0$, the projective
completion $Y_{\tilde{S}}$ of $K_{\tilde{S}}$ and $Z$, a threefold
dominating the last two, obtained by blowing them up along a
specific section of the exceptional divisor in $\tilde{S}$. For
each pair of spaces, a degeneration is constructed with the goal
of comparing absolute invariants of one with relative invariants
of the other. Then we prove that the virtual dimension of one of
the moduli spaces of relative stable maps appearing in the
degeneration formula is negative as soon as there are nontrivial
contacts with the relative divisors. Next a second application of
the degeneration formula compares such relative invariants with
the absolute invariants of the same space. This sequence of
comparing results implies the following theroem:

\begin{theorem}\label{thm-1-1}
Suppose that $S$ is a smooth surface and $\tilde{S}$ is the
blown-up surface of $S$ at a smooth point $p$. Let $\beta\in
H_2(S,\mathbb{Z})$. Then we have
\begin{equation}
    Z'_{DT}(S;q)_\beta = Z'_{DT}(\tilde{S};q)_{p!(\beta)},
\end{equation}
where $p:\tilde{S}\longrightarrow S$ is the natural projection of
the blowup.
\end{theorem}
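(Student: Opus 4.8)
\medskip

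The plan is to transpose the strategy of \cite{Hu3} from Gromov--Witten to Donaldson--Thomas theory, replacing moduli of stable maps by moduli of ideal sheaves and using the degeneration formula of relative Donaldson--Thomas theory (cf. \cite{MNOP2}). By definition $Z'_{DT}(S;q)_\beta$ is the reduced DT partition function of the threefold $Y_S=\mathbb{P}(K_S\oplus\mathcal{O})$ in the curve classes lying over $\beta\in H_2(S,\mathbb{Z})$, and similarly for $\tilde S$; so Theorem \ref{thm-1-1} amounts to $Z'_{DT}(Y_S;q)_\beta = Z'_{DT}(Y_{\tilde S};q)_{p!(\beta)}$. I would prove this by threading the chain of birational threefolds
\[
Y_S \longleftarrow \tilde Y_S \longleftarrow Z \longrightarrow Y_{\tilde S}
\]
described in the Introduction, reducing to the following local statement: for a blowup $\pi\colon M'\to M$ of smooth projective threefolds along a smooth center $W$ occurring in this chain, one has $Z'_{DT}(M';q)_{\beta'} = Z'_{DT}(M;q)_{\beta}$ for the matching pair of curve classes. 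Concatenating these equalities along the chain and tracking how the class transforms (the exceptional classes introduced at each step decouple, so $\beta$ is carried to $p!(\beta)$) then yields the theorem.

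For a fixed such $\pi$, the first step is the deformation of $M$ to the normal cone of $W$: a flat family over $\mathbb{A}^1$ with generic fibre $M$ and special fibre $M'\cup_E P$, where $E=\mathbb{P}(N_{W/M})$ is the exceptional divisor of $\pi$ and $P=\mathbb{P}(N_{W/M}\oplus\mathcal{O})$ is glued to $M'$ along $E$. The DT degeneration formula then expresses $Z'_{DT}(M;q)_\beta$ as a sum, over splittings $\beta=\beta_1+\beta_2$ of the class and over cohomology-weighted partitions $\eta$ recording the contact order along $E$, of products of relative DT partition functions of $(M',E)$ and of $(P,E)$. The second step applies the formula again, this time to $M'$ itself: deforming $M'$ to the normal cone of the \emph{divisor} $E\subset M'$ leaves $M'$ unchanged on one side and produces a $\mathbb{P}^1$-bundle $P'=\mathbb{P}(N_{E/M'}\oplus\mathcal{O})$ over $E$ on the other, so $Z'_{DT}(M';q)_{\beta'}$ is expressed as a sum of products of the \emph{same} relative invariants of $(M',E)$ with relative invariants of $(P',E)$.

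The crux is the vanishing statement: the relative DT invariants of $(M',E)$ with nontrivial relative data $\eta$ are zero for the curve classes in play. Since every class $\beta$ occurring here is pulled back from $M$, it has vanishing intersection with $E$, so a nonempty contact with $E$ forces a subscheme whose curve part lies in the normal directions of $E$; a Riemann--Roch computation for the Hilbert-scheme deformation theory of the corresponding ideal sheaves on the expanded pair, using the negativity of those normal directions (the center $W$ being the fibre $\mathbb{P}^1$ over $p$), shows that the virtual dimension of that relative moduli space is strictly negative, whence the invariant vanishes. This is the Donaldson--Thomas analogue of the negative-dimension lemma of \cite{Hu3}, and I expect it to be the main obstacle: the dimension bookkeeping and the rubber contributions of the expanded degenerations have to be redone with the obstruction theory of ideal sheaves rather than with the space of stable maps.

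Granting this, both degeneration formulas collapse to their $\eta=\emptyset$ terms. In each formula the $\eta=\emptyset$ term is the relative DT partition function of $(M',E)$ with empty boundary carrying the whole class, times the contribution of the auxiliary $\mathbb{P}^1$-bundle ($P$ in the first formula, $P'$ in the second) with empty boundary and curve class $0$; the latter is the reduced DT series in class $0$, which is $1$ in both cases. Hence
\[
Z'_{DT}(M;q)_\beta \;=\; \big(\text{relative DT series of }(M',E)\text{ with empty boundary, class }\beta\big) \;=\; Z'_{DT}(M';q)_{\beta'},
\]
completing the reduction and, after concatenating along the chain, the proof of Theorem \ref{thm-1-1}.
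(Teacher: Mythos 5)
Your proposal follows essentially the same route as the paper: it threads the chain $Y_S \leftarrow \tilde Y_S \leftarrow Z \rightarrow Y_{\tilde S}$, compares each pair by a deformation to the normal cone (of the blowup center on one side, of the exceptional divisor on the other), and kills all terms with nontrivial contact data $\eta$ by the virtual-dimension count for ideal sheaves combined with $c_1\cdot\beta=0$ for the local Calabi--Yau classes, exactly as in Lemmas 4.1--4.2 and Theorems 4.4--4.5. The only detail you elide is that after $\eta=\emptyset$ one must still rule out splittings with $\beta_2\neq 0$ supported in the auxiliary $\mathbb{P}^1$-bundle; the paper does this by showing either $\beta_2\cdot D_1=0$ forces $\beta_2=0$ or that $c_1(X_2)\cdot\beta_2>0$ makes the empty-boundary relative invariant vanish for dimension reasons.
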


\begin{remark}
Theorem \ref{thm-1-1} make it possible to compute the
Donaldson-Thomas invariants of local nontoric del Pezzo surfaces
$\tilde{\mathbb P}^2_r$, $4\leq r\leq 8$, from the
Donaldson-Thomas invariants of toric del Pezzo surfaces
$\tilde{\mathbb P}^2_r$, $1\leq r\leq 3$.
\end{remark}

{\bf Acknowledgements} The author would like to thank  Prof.
Yongbin Ruan, Wei-Ping Li, Zhenbo Qin and M. Roth for their
valuable discussions. Thanks also to Dr. P. Li for his help in
drawing the figures.

\section{\bf Preliminaries}
In this section, we shall discuss the basic materials on
Donaldson-Thomas invariants studied by Maulik, Nekrasov, Okounkov
and Pandharipande.
 For
the details, one can consult \cite{D-T, L-R, MNOP1, MNOP2,
Thomas}.

Let $X$ be a smooth projective  3-fold and $\mathcal I$ be an
ideal sheaf on $X$. Assume the sub-scheme $Y$ defined by $\mathcal
I$ has dimension $\le 1$. Here $Y$ is allowed to have embedded
points on the curve components. Therefore we have the exact
sequence
\begin{eqnarray*}
      0\longrightarrow {\mathcal I} \longrightarrow {\mathcal
      O}_X\longrightarrow {\mathcal O}_Y\longrightarrow 0.
\end{eqnarray*}

The $1$-dimensional components, with multiplicities taken into
consideration, determine a homology class
\begin{eqnarray*}
     [Y]\in H_2(X, \Z).
\end{eqnarray*}

Let $I_n(X,\beta)$ denote the moduli space of ideal sheaves
$\mathcal I$ satisfying
\begin{eqnarray*}
     \chi({\mathcal O}_Y) = n,
     \quad
   [Y] = \beta \in H_2(X, \Z).
\end{eqnarray*}
$I_n(X,\beta)$ is projective and is a  fine moduli space. From the
deformation theory, one can compute the virtual dimension of
$I_n(X,\beta)$ to obtain the following result

\begin{lemma}\label{lem2.1}  The virtual dimension of
$I_n(X,\beta)$, denoted by $\text{vdim}$,  equals $\int_\beta
c_1(T_X)$.
\end{lemma}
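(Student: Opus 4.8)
The plan is to run the standard deformation-theoretic computation for moduli of ideal sheaves on a threefold, following \cite{Thomas, MNOP1}, and reduce the virtual dimension to a Riemann--Roch number. At a point $[\mathcal I]\in I_n(X,\beta)$ the trace map splits $\operatorname{Ext}^i(\mathcal I,\mathcal I)=\operatorname{Ext}^i_0(\mathcal I,\mathcal I)\oplus H^i(\mathcal O_X)$, and the deformations and obstructions of $\mathcal I$ are controlled by the trace-free parts, so $\operatorname{vdim} I_n(X,\beta)=\operatorname{ext}^1_0(\mathcal I,\mathcal I)-\operatorname{ext}^2_0(\mathcal I,\mathcal I)$. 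Next I would note that the outer trace-free groups vanish: $\mathcal I$ is a rank-one torsion-free sheaf, hence simple, so $\operatorname{Ext}^0_0(\mathcal I,\mathcal I)=0$; and Serre duality gives $\operatorname{Ext}^3(\mathcal I,\mathcal I)\cong\operatorname{Hom}(\mathcal I,\mathcal I\otimes K_X)^\vee$, where $\operatorname{Hom}(\mathcal I,\mathcal I\otimes K_X)\subseteq\operatorname{Hom}(\mathcal I,K_X)=H^0(K_X)$ via $\mathcal I\hookrightarrow\mathcal O_X$ together with $\mathcal{H}om(\mathcal I,\mathcal O_X)=\mathcal O_X$, and this copy of $H^0(K_X)$ is precisely the trace part, so $\operatorname{Ext}^3_0(\mathcal I,\mathcal I)=0$. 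Hence
\[
\operatorname{vdim} I_n(X,\beta)=-\sum_i(-1)^i\operatorname{ext}^i_0(\mathcal I,\mathcal I)=-\bigl(\chi(\mathcal I,\mathcal I)-\chi(\mathcal O_X,\mathcal O_X)\bigr).
\]

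I would then compute the right-hand side by Hirzebruch--Riemann--Roch, $\chi(F,G)=\int_X\operatorname{ch}(F)^\vee\operatorname{ch}(G)\,\operatorname{td}(T_X)$. The sequence $0\to\mathcal I\to\mathcal O_X\to\mathcal O_Y\to0$ gives $\operatorname{ch}(\mathcal I)=1-w$ with $w:=\operatorname{ch}(\mathcal O_Y)$, and since $\dim Y\le1$ we have $w=w_2+w_3$ with $w_2=\mathrm{PD}(\beta)\in H^4(X)$ and $w_3\in H^6(X)$. Then $\operatorname{ch}(\mathcal I)^\vee\operatorname{ch}(\mathcal I)=(1-w^\vee)(1-w)=1-w-w^\vee+w^\vee w$; on a threefold $w^\vee w\in H^{\geq 8}(X)=0$, and $w+w^\vee=2w_2$, so $\operatorname{ch}(\mathcal I)^\vee\operatorname{ch}(\mathcal I)-1=-2\,\mathrm{PD}(\beta)$. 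Pairing against $\operatorname{td}(T_X)$, only the degree-two term $\tfrac12 c_1(T_X)$ contributes for dimension reasons, so
\[
\chi(\mathcal I,\mathcal I)-\chi(\mathcal O_X,\mathcal O_X)=\int_X\bigl(-2\,\mathrm{PD}(\beta)\bigr)\cdot\tfrac12 c_1(T_X)=-\int_\beta c_1(T_X),
\]
and combining with the previous display yields $\operatorname{vdim} I_n(X,\beta)=\int_\beta c_1(T_X)$.

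As for the difficulty: the Riemann--Roch bookkeeping and the sign conventions in $\operatorname{ch}(-)^\vee$ and $\operatorname{td}(T_X)$ are routine, and the one genuinely substantive input --- that the obstruction theory of $I_n(X,\beta)$ is exactly the two-term trace-free complex $[\operatorname{Ext}^1_0\to\operatorname{Ext}^2_0]$ with $\operatorname{Ext}^0_0$ and $\operatorname{Ext}^3_0$ vanishing, so that $\operatorname{vdim}=-(\chi(\mathcal I,\mathcal I)-\chi(\mathcal O_X,\mathcal O_X))$ --- I would cite from \cite{Thomas, MNOP1} rather than reprove. Granting that, the main point requiring care is the vanishing of $\operatorname{Ext}^3_0(\mathcal I,\mathcal I)$, which is what collapses the virtual dimension to the single cohomological pairing $\int_\beta c_1(T_X)$.
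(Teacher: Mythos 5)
Your proof is correct, and it is exactly the standard argument the paper has in mind: the paper states Lemma \ref{lem2.1} without proof, simply invoking deformation theory (the computation is the one in \cite{MNOP1, Thomas}), namely the trace-free obstruction theory with $\operatorname{Ext}^0_0=\operatorname{Ext}^3_0=0$ followed by Riemann--Roch for $\operatorname{ch}(\mathcal I)=1-\operatorname{ch}(\mathcal O_Y)$. Your bookkeeping (in particular $w+w^\vee=2\,\mathrm{PD}(\beta)$ and the pairing with $\tfrac12 c_1(T_X)$) checks out, so there is nothing to add.
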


Note that the actual dimension of the moduli space $I_n(X,\beta)$
is usually larger than the virtual dimension.

 Let $\mathfrak I$ be the universal family
over $I_n(X,\beta)\times X$ and $\pi_i$ be the projection of
$I_n(X,\beta)\times X$ to the $i$-th factor. For a cohomology
class $\gamma \in H^l(X,\Z)$, consider the  operator
\begin{eqnarray*}
    ch_{k+2}(\gamma) : H_*(I_n(X,\beta), {\mathbb Q}) \longrightarrow
    H_{*-2k+2-l}(I_n(X,\beta),{\mathbb Q}),
\end{eqnarray*}
\begin{eqnarray*}
  ch_{k+2}(\gamma)(\xi) = \pi_{1*}(ch_{k+2}({\mathcal J})\cdot
  \pi_2^*(\gamma)\cap\pi_1^*(\xi)).
\end{eqnarray*}

Descendent fields in Donaldson-Thomas theory are defined in
\cite{MNOP2}, denoted by $\tilde{\tau}_k(\gamma)$, which
correspond to the operations $(-1)^{k+1}ch_{k+2}(\gamma)$. The
descendent invariants are defined  by
\begin{eqnarray*}
<\tilde{\tau}_{k_1}(\gamma_{l_1})\cdots
\tilde{\tau}_{k_r}(\gamma_{l_r})>_{n,\beta} =
\int_{[I_n(X,\beta)]^{vir}}\prod_{i=1}^r(-1)^{k_i+1}ch_{k_i+2}(\gamma_{l_i}),
\end{eqnarray*}
where the latter integral is the push-forward to a point of the
class
\begin{eqnarray*}
   (-1)^{k_1+1}ch_{k_1+2}(\gamma_{l_1})\circ \cdots \circ
   (-1)^{k_r+1}ch_{k_r+2}(\gamma_{l_r})([I_n(X,\beta)]^{vir}).
\end{eqnarray*}

The Donaldson-Thomas partition function with descendent insertions
is defined by
\begin{eqnarray*}
   Z_{DT}(X;q\mid\prod_{i=1}^r\tilde{\tau}_{k_i}(\gamma_{l_i}))_\beta
   = \sum_{n\in
   \Z}<\prod_{i=1}^r\tilde{\tau}_{k_i}(\gamma_{l_i})>_{n,\beta}q^n.
\end{eqnarray*}

The degree 0 moduli space $I_n(X,0)$ is isomorphic to the Hilbert
scheme of $n$ points on $X$. The degree 0 partition function is
${\bf Z}_{DT}(X;q)_0$.

The reduced partition function is obtained by formally removing
the degree $0$ contributions,
\begin{eqnarray*}
  Z'_{DT}(X;q\mid\prod_{i=1}^r\tilde{\tau}_{k_i}(\gamma_{l_i}))_\beta
  =
\frac{Z_{DT}(X;q\mid\prod\limits_{i=1}^r\tilde{\tau}_{k_i}(\gamma_{l_i}))_\beta}{Z_{DT}(X;q)_0}.
\end{eqnarray*}

Relative Donaldson-Thomas invarints are also defined in
\cite{MNOP2}. Let $S$ be a smooth divisor in $X$. An ideal sheaf
$\mathcal I$ is said to be relative to $S$ if the morphism
\begin{eqnarray*}
\mathcal I\otimes_{\mathcal O_X}\mathcal O_S\rightarrow \mathcal
O_X\otimes _{\mathcal O_X}\mathcal O_S
\end{eqnarray*}
is injective. A proper moduli space $I_n(X/S,\beta)$ of relative
ideal sheaves can be constructed by considering the ideal sheaves
relative to the expended pair $(X[k], S[k])$. For details, one can
read  \cite{Li2} and \cite{MNOP2}.

Let $Y$ be the subscheme defined by $\mathcal I$. The scheme
theoretic intersection $Y\cap S$ is an element in the Hilbert
scheme of points on $S$ with length $[Y]\cdot S$. If we use
$\mbox{Hilb}(S, k)$ to denote the Hilbert scheme of points of
length $k$ on $S$, we  have a map
\begin{eqnarray*}
   \epsilon : I_n(X/S,\beta) \longrightarrow \mbox{Hilb}(S,
   \beta\cdot [S]).
\end{eqnarray*}

The cohomology of the Hilbert scheme of points of $S$ has a basis
via the representation of the Heisenberg algebra on the
cohomologies of the Hilbert schemes.

Following Nakajima in \cite{Nakajima}, let $\eta$ be a cohomology
weighted partition with respect to a basis of $H^*(S, {\mathbb
Q})$. Let $\eta=\{\eta_1, \ldots, \eta_s\}$ be a partition whose
corresponding cohomology classes are $\delta_1, \cdots, \delta_s$,
let
\begin{eqnarray*}
    C_\eta
    =\frac{1}{\mathfrak{z}(\eta)}P_{\delta_1}[\eta_1]\cdots P_{\delta_s}[\eta_s]\cdot{\bf
    1}\in H^*(\mbox{Hilb}(S, |\eta|), {\mathbb Q}),
\end{eqnarray*}
where
\begin{eqnarray*}
     \mathfrak{z}(\eta) = \prod_i \eta_i|\mbox{Aut}(\eta)|,
\end{eqnarray*}
and $|\eta|=\sum_j\eta_j$. The Nakajima basis of the cohomology of
$\mbox{Hilb}(S,k)$ is the set,
\begin{eqnarray*}
   \{C_\eta\}_{|\eta|=k}.
\end{eqnarray*}

We can choose a basis of $H^*(S)$ so that it is self dual with
respect to the Poincar\'e pairing, i.e., for any $i$, $\delta_i^*
=\delta_j$ for some $j$.  To each weighted partition $\eta$, we
define the dual partition $\eta^\vee$ such that
$\eta^\vee_i=\eta_i$ and the corresponding cohomology class to
$\eta^\vee_i$ is $\delta_i^*$. Then we have
\begin{eqnarray*}
 \int_{\mbox{Hilb}(S,k)}C_\eta\cup C_\nu =
 \frac{(-1)^{k-\ell(\eta)}}{\mathfrak{z}(\eta)}\delta_{\nu,\eta^\vee},
\end{eqnarray*}
see \cite{Nakajima}.

The descendent invariants in the relative Donaldson-Thomas theory
are defined by
\begin{eqnarray*}
<\tilde{\tau}_{k_1}(\gamma_{l_1})\cdots\tilde{\tau}_{k_r}(\gamma_{l_r})\mid
\eta>_{n,\beta} =
\int_{[I_n(X/S,\beta)]^{vir}}\prod_{i=1}^r(-1)^{k_i+1}ch_{k_i+2}(\gamma_{l_i})\cap
\epsilon^*(C_\eta),
\end{eqnarray*}

Define the associated partition function by
\begin{eqnarray*}
   Z_{DT}(X/S;q\mid\prod_{i=1}^r\tilde{\tau}_{k_i}(\gamma_{l_i}))_{\beta,\eta}
   = \sum_{n\in\Z}
   <\prod_{i=1}^r\tilde{\tau}_{k_i}(\gamma_{l_i})\mid\eta>_{n,\beta}q^n.
\end{eqnarray*}

The reduced partition function is obtained by formally removing
the degree $0$ contributions,
\begin{eqnarray*}
Z'_{DT}(X/S;q\mid\prod_{i=1}^r\tilde{\tau}_{k_i}(\gamma_{l_i}))_{\beta,\eta}=
\frac{Z_{DT}(X/S;q\mid\prod\limits_{i=1}^r\tilde{\tau}_{k_i}(\gamma_{l_i}))_{\beta,\eta}}{Z_{DT}(X/S;q)_0}.
\end{eqnarray*}

Since this is the main tool employed in this paper, so in the
remaining of the section, we shall review some notations in the
degeneration formula, see \cite{Li2} for the details.

 Let $\pi\colon\mathcal X\to C$ be
a smooth $4$-fold over a smooth irrreducible curve $C$ with a
marked point denoted by $\bf 0$ such that $\mathcal
X_t=\pi^{-1}(t)\cong X$ for $t\neq {\bf 0}$ and $\mathcal X_{\bf
0}$ is a union of two smooth $3$-folds $X_1$ and $X_2$
intersecting transversely along a smooth surface $S$. We write
$\mathcal X_{\bf 0}=X_1\cup_SX_2$. Assume that $C$ is contractible
and $S$ is simply-connected.

Consider the natural maps
\begin{eqnarray*}
i_t\colon X=\mathcal X_t\rightarrow \mathcal X,\qquad i_{\bf
0}\colon \mathcal X_{\bf 0}\rightarrow \mathcal X,
\end{eqnarray*}
and the gluing map
\begin{eqnarray*}
g=(j_1, j_2)\colon X_1\coprod X_2\rightarrow \mathcal X_{\bf 0}.
\end{eqnarray*}

We have
\begin{eqnarray*}
H_2(X){\buildrel{i_{t*}}\over \longrightarrow} H_2(\mathcal
X){\buildrel{i_{0*}}\over \longleftarrow} H_2(\mathcal X_{\bf
0}){\buildrel{g_*}\over\longleftarrow} H_2(X_1)\oplus H_2(X_2),
\end{eqnarray*}
where $i_{0*}$ is an isomorphism since there exists a deformation
retract from $\mathcal X$ to $\mathcal X_{\bf 0}$ (see
\cite{Clemens}) and $g_*$ is surjective from Mayer-Vietoris
sequence. For $\beta \in H_2(X)$, there exist $\beta_1\in
H_2(X_1)$ and $\beta_2\in H_2(X_2)$ such that
\begin{eqnarray}\label{betasum}
i_{t*}(\beta)=i_{0*}(j_{1*}(\beta_1)+j_{2*}(\beta_2)).
\end{eqnarray}
For simplicity, we write $\beta=\beta_1+\beta_2$ instead.
\begin{lemma}\label{lem2.2}
With the assumption as above,  given $\beta = \beta_1 + \beta_2$.
Let $ d = \int_\beta c_1(X)$ and $d_i = \int_{\beta_i}c_1(X_i)$, $
i = 1,2$. Then
\begin{eqnarray}\label{deg-formula}
    d = d_1 + d_2 - 2\int_{\beta_1}[S],\qquad  \int_{\beta_1} [S]=\int_{\beta_2}
    [S].
\end{eqnarray}
\end{lemma}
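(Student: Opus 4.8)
The plan is to compute the first Chern classes of $X_1$ and $X_2$ in terms of that of the total space $\mathcal X$ using the adjunction formula, and then to restrict everything to a class $\beta$ in the central fiber. First I would observe that since $\pi\colon\mathcal X\to C$ is a smooth morphism onto a curve, the relative canonical bundle gives $c_1(\mathcal X) = c_1(T_{\mathcal X}) $ with $K_{\mathcal X} = \pi^*K_C \otimes K_{\mathcal X/C}$; more usefully, the central fiber $\mathcal X_{\bf 0} = X_1\cup_S X_2$ is a principal divisor (the pullback of the point $\bf 0\in C$), so $\mathcal O_{\mathcal X}(X_1 + X_2) = \pi^*\mathcal O_C({\bf 0})$ is trivial on a neighborhood, hence $[X_1] + [X_2] = 0$ in the relevant Picard group and in particular $[X_1]|_{X_1} = -[X_2]|_{X_1}$. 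Since $X_1\cap X_2 = S$, we have $[X_2]|_{X_1} = [S]$ as a class on $X_1$ (and symmetrically $[X_1]|_{X_2} = [S]$ on $X_2$), so the normal bundle of $X_1$ inside $\mathcal X$ restricted to... more precisely $N_{X_1/\mathcal X} = \mathcal O_{X_1}(X_1) = \mathcal O_{X_1}(-S)$.

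The core computation is the adjunction formula for the divisor $X_i \subset \mathcal X$:
\begin{equation*}
   K_{X_i} = \left(K_{\mathcal X}\otimes \mathcal O_{\mathcal X}(X_i)\right)\big|_{X_i},
\end{equation*}
equivalently, on the level of first Chern classes of tangent bundles, $c_1(T_{X_i}) = \big(c_1(T_{\mathcal X}) - [X_i]\big)\big|_{X_i}$. Since $\beta$ is a fiber class, $i_{t*}\beta$ maps to $i_{0*}(j_{1*}\beta_1 + j_{2*}\beta_2)$, and because $\pi\circ i_t$ is constant, $(\pi^*K_C)$ pairs to zero against $\beta$; thus $\int_\beta c_1(T_{\mathcal X}) = \int_\beta c_1(K_{\mathcal X/C}^{\vee})$, and this number is unchanged under the isomorphism $i_{0*}$, so it equals $\int_{j_{1*}\beta_1 + j_{2*}\beta_2} c_1(T_{\mathcal X})|_{\mathcal X_{\bf 0}}$. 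I would then split this as a sum over $\beta_1$ and $\beta_2$: pairing $c_1(T_{\mathcal X})|_{X_i}$ against $\beta_i$ and using adjunction in the form $c_1(T_{\mathcal X})|_{X_i} = c_1(T_{X_i}) + [X_i]|_{X_i} = c_1(T_{X_i}) - [S]|_{X_i}$ gives
\begin{equation*}
   \int_{\beta_i} c_1(T_{\mathcal X})|_{X_i} = d_i - \int_{\beta_i}[S].
\end{equation*}
Summing $i=1,2$ yields $d = (d_1 - \int_{\beta_1}[S]) + (d_2 - \int_{\beta_2}[S])$. For the second assertion, $\int_{\beta_1}[S] = \int_{\beta_2}[S]$: here $[S]$ on $X_1$ is $[X_2]|_{X_1}$ and on $X_2$ is $[X_1]|_{X_2}$; both equal the restriction of the divisor class $[S]\subset \mathcal X$ (obtained by viewing $S$ as a codimension-2 cycle, or via $[X_1]|_{\mathcal X_{\bf 0}} = -[X_2]|_{\mathcal X_{\bf 0}}$), and since $i_{t*}\beta$ matches $j_{1*}\beta_1 + j_{2*}\beta_2$ under $i_{0*}$, the intersection of either piece with the fixed class of $S$ in $\mathcal X$ must be equal —- more directly, $0 = \int_\beta [X_1] = \int_{\beta_1}[X_1]|_{X_1} + \int_{\beta_2}[X_1]|_{X_2} = -\int_{\beta_1}[S] + \int_{\beta_2}[S]$, since $[X_1]$ is trivial on the whole family. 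Combining, $d = d_1 + d_2 - 2\int_{\beta_1}[S]$.

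The main obstacle I anticipate is bookkeeping the identification of $[S]$ as a class on $X_1$ versus on $X_2$ versus in $\mathcal X$, and making sure the pairings are taken against the right pushed-forward classes; the simple-connectedness of $S$ and contractibility of $C$ are presumably invoked precisely to guarantee $i_{0*}$ is an isomorphism and that the decomposition $\beta = \beta_1 + \beta_2$ behaves well, as recalled just before the lemma. The only genuinely nontrivial input is adjunction together with the fact that $[X_1]+[X_2]$ is the trivial class (pullback from the base), and once those are in hand the result is a two-line linear computation.
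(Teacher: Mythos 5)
Your proof is correct and follows essentially the same route as the paper: adjunction $K_{X_i}=(K_{\mathcal X}+X_i)|_{X_i}$ and $K_{\mathcal X_t}=K_{\mathcal X}|_{\mathcal X_t}$, together with the triviality of $\mathcal O_{\mathcal X}(X_1+X_2)$ near the central fiber (equivalently $X_1\cdot\mathcal X_{\bf 0}=0$), which gives $[X_i]|_{X_i}=-[S]$ and the equality $\int_{\beta_1}[S]=\int_{\beta_2}[S]$. Only the throwaway phrases ``$\pi$ is a smooth morphism'' (it is not smooth along $S$) and ``$[X_1]$ is trivial on the whole family'' (it is only trivial on the general fiber, which is what your displayed identity actually uses) should be tightened, but they do not affect the argument.
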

\begin{proof}
The formulae (\ref{deg-formula}) come from the adjunction formulae
$K_{\mathcal X_t}=K_{\mathcal X}|_{\mathcal X_t}$ and
$K_{X_i}=(K_{\mathcal X}+X_i)|_{X_i}$ for $i=1, 2$, and $X_1\cdot
(X_1+X_2)=X_1\cdot \mathcal X_{\bf 0}=0$.
\end{proof}

 Similarly for cohomology, we have the maps
\begin{eqnarray*}
H^k(\mathcal X_t){\buildrel{i_t^*}\over\longleftarrow}
H^k(\mathcal X){\buildrel{i_0^*}\over \longrightarrow }
H^k(\mathcal X_{\bf
0}){\buildrel{g^*}\over\longrightarrow}H^k(X_1)\oplus H^k(X_2),
\end{eqnarray*}
where $i_0^*$ is an isomorphism. Take $\alpha\in H^k(\mathcal X)$
and let $\alpha(t)=i^*_t\alpha$.

There is a degeneration formula  which takes the form
\begin{eqnarray}\label{degeneration-formula}
& &Z'_{DT}(\mathcal
X_t;q\mid\prod\limits_{i=1}^r\tilde{\tau}_0(\gamma_{l_i}(t)))_\beta\nonumber\\
& = &\sum
Z'_{DT}({X_1}/{S};q\mid\prod\tilde{\tau}_0(j_1^*\gamma_{l_i}(0)))_{\beta_1,\eta}\displaystyle{
\frac{(-1)^{|\eta|-\ell(\eta)}\mathfrak{z}(\eta)}{q^{|\eta|}}}
\\
& &\times Z'_{DT}({X_2}/{S};q\mid\prod
\tilde{\tau}_0(j_2^*\gamma_{\l_i}(0)))_{\beta_2,\eta^\vee},\nonumber
\end{eqnarray}
where the sum is over the splittings $\beta_1 + \beta_2 = \beta$,
and cohomology weighted partitions $\eta$. $\gamma_{l_i}$'s are
cohomology classes on $\mathcal X$. There is a compatibility
condition
\begin{eqnarray}\label{comp}
|\eta|=\beta_1\cdot [S]=\beta_2\cdot [S].
\end{eqnarray}

For details, one can see \cite{Li1, Li2, MNOP2}.

\section{Projective completion  }\label{pc}
   In this section, we describe how to obtain $Y_{\tilde{S}}$ from
$Y_S$ by the degenerations. This makes it possible to find some
relations between the local Donaldson-Thomas invariants of
$\tilde{S}$ and $S$.

   Let $S$ be a smooth surface and $Y_S= \mathbb{P}(K_S\oplus
{\mathcal O})$ the projective completion of its canonical bundle
$K_S$. Consider the blowup $p: \tilde{S}\rightarrow S$ of $S$ at a
smooth point $p_0$ and denote by  $E$ the exceptional divisor in
$\tilde{S}$. Since $Y_S$ is the bundle $\mathbb{P}(K_S\oplus
{\mathcal O})$ over $S$, one can pull this bundle back to
$\tilde{S}$ using the projection $p$. It is easy to see that the
pullback bundle is the same thing as blowing up the fiber over
$p_0$. Denote by $\tilde{Y}_S$ the blowup of $Y_S$ along the fiber
$F_{p_0}\cong \mathbb{P}^1$ over $p_0$, and the exceptional
divisor in $\tilde{Y}_S$ is denoted by $D_1:= E\times \mathbb{P}^1
= \mathbb{P}_{\mathbb{P}^1}({\mathcal O}\oplus {\mathcal O})$. In
$\tilde{Y}_S$, take a section, $\sigma$ , corresponding to
${\mathcal O}\longrightarrow {\mathcal O}\oplus K_S$, of the
exceptional divisor $D_1$ over $E$ and blow it up. Denote by $Z$
the blown-up manifold,then $Z$ has a natural projection $\pi$ to
$\tilde{S}$ given by the composition of the blowup projection
$Z\longrightarrow \tilde{Y}_S$ and the bundle projection
$\tilde{Y}_S\longrightarrow \tilde{S}$. It is easy to see that the
fiber $\pi^{-1}(E)$ has two normal crossing components: $D_1\cong
\mathbb{F}_0$ and $D_2\cong \mathbb{F}_1$ intersecting along a
section $\sigma$ with the normal bundle
$N_{\sigma|\mathbb{F}_0}\cong {\mathcal O}$ and
$N_{\sigma|\mathbb{F}_1}\cong {\mathcal O}(-1)$ respectively.

Next, we consider the projective completion $Y_{\tilde{S}}$. Since
the restriction $K_{\tilde{S}}\mid_E$ of the canonical bundle
$K_S$ to the exceptional divisor $E$ in $\tilde{S}$ is isomorphic
to ${\mathcal O}(-1)$, so we can pick up a section, $\sigma_1$, of
the restriction of $Y_{\tilde{S}}$ to $E$ satisfying $\sigma_1^2
=-1$. Then we blow this section $\sigma_1$ up, and it is easy to
know that the blown-up manifold is $Z$. Here we illustrate the
sequence of birational maps by Figure 1.

\begin{figure}[] \centering
\includegraphics[bb=0 0 505 300,totalheight=90mm,
angle=0,width=150mm]{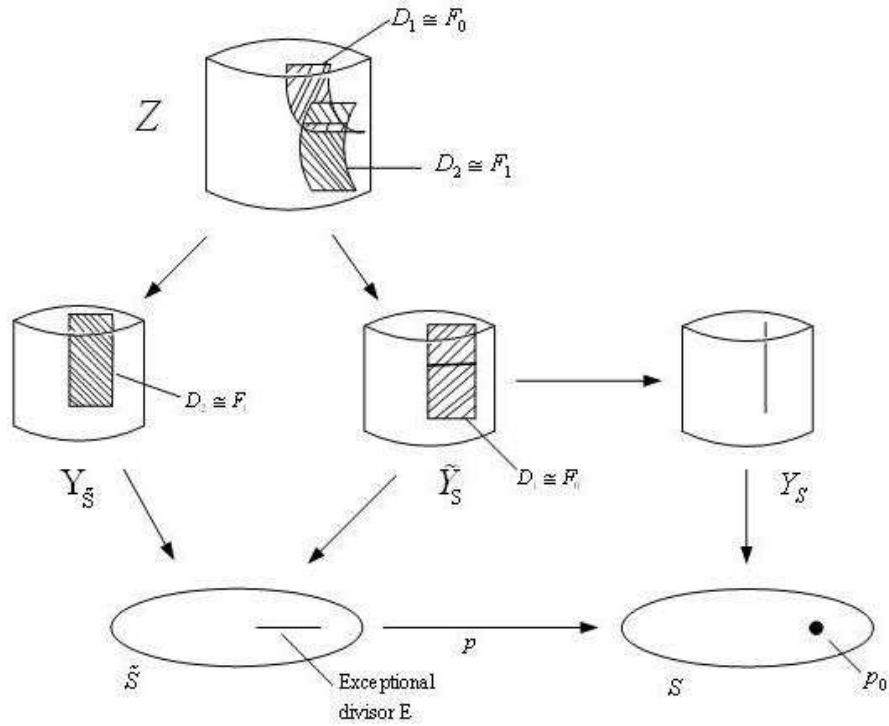} \caption{  birational maps }
\end{figure}

Let $\tilde{\mathbb P}^2_{r}$ be the blowup of ${\mathbb P}^2$ at
r points. Pick one more point $p$ and blow it up, then we obtain
$\tilde{\mathbb P}^2_{r+1} $ with the map $p:\tilde{\mathbb
P}^2_{r+1}\longrightarrow \tilde{\mathbb P}^2_r$ and denote by $E$
the exceptional divisor in $\tilde{\mathbb P}^2_{r+1}$. It is
well-known that for $0\leq r \leq 3$, $\tilde{\mathbb{P}}_r^2$ is
toric, but for $4\leq r\leq 8$, $\tilde{\mathbb P}^2_r$ is
non-toric. In \cite{MNOP1}, via the localization technique, the
authors computed the local Donaldson-Thomas invariants of toric
surfaces, in particular, their method is valid for del Pezzo
surfaces $\tilde{\mathbb{P}}_r^2$ with $0\leq r \leq 3$. As
opposed to toric del Pezzo surfaces, one can not directly use
localization with respect to a torus action because there is no
torus action on a generic del Pezzo surface $\tilde{\mathbb
P}^2_r$,$4\leq r\leq 8$. Our Theorem \ref{thm-1-1} implies that
for some degrees, we could compute the local Donaldson-Thomas
invariants of non-toric surfaces $\tilde{\mathbb P}^2_r$ with
$4\leq r\leq 8$ from the local Donaldson-Thomas invariants of
$\tilde{\mathbb P}^2_r$ with $0\leq r\leq 3$.

\section{Main theorems}

Using the notation as before, we have

\begin{lemma}\label{lem4.1}
Suppose that $S$ is a smooth surface. Let $\tilde{Y}_S$ be the
blowup of $Y_S$ along the fiber over $p_0\in S$. Then for any
$\beta\in H_2(S;{\mathbb Z})$, we have
$$
   Z'_{DT}(Y_S;q)_\beta = Z'_{DT}(\tilde{Y}_S/D_1; q)_{p!(\beta), \emptyset},
$$
where $D_1={\mathbb P}_{{\mathbb P}^1}({\mathcal O}\oplus
{\mathcal O})\cong {\mathbb P}^1\times {\mathbb P}^1$ is the
exceptional divisor in $\tilde{Y}_S$, $p: \tilde{S}\longrightarrow
S$ is the blowup of $S$ at $p_0$ and $p!(\beta) = PD
p^*PD(\beta)$.
\end{lemma}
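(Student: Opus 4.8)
The plan is to prove Lemma \ref{lem4.1} by applying the degeneration formula (\ref{degeneration-formula}) to a semistable degeneration of $Y_S$ whose central fiber is $\tilde{Y}_S \cup_{D_1} W$, where $W$ is the second component created when one degenerates $Y_S$ to the normal cone of the fiber $F_{p_0} \cong \mathbb{P}^1$. First I would set up this degeneration explicitly: blow up $Y_S \times \mathbb{A}^1$ along $F_{p_0} \times \{0\}$, so that the central fiber breaks into $\tilde{Y}_S$ (the blowup of $Y_S$ along $F_{p_0}$) glued along the exceptional divisor $D_1 \cong \mathbb{P}^1 \times \mathbb{P}^1$ to the projectivized normal bundle $W = \mathbb{P}(N_{F_{p_0}/Y_S} \oplus \mathcal{O})$. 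Since $N_{F_{p_0}/Y_S} = \mathcal{O} \oplus \mathcal{O}$ (the fiber $F_{p_0}$ moves in the surface direction and has trivial normal direction in the line-bundle fiber, as $K_S$ restricted to a point is trivial), $W$ is a $\mathbb{P}^1$-bundle over $\mathbb{P}^1$, in fact $W \cong \mathbb{P}^1 \times \mathbb{P}^1 \times \mathbb{P}^1$-like, a very simple rational threefold. The key geometric input is that the classes $\beta \in H_2(S;\mathbb{Z})$, viewed in $H_2(Y_S)$ via the zero section, satisfy $\beta \cdot F_{p_0} = 0$, so $\beta$ is disjoint from the center of the blowup.

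The main steps are then as follows. I would first invoke the deformation invariance of reduced Donaldson-Thomas partition functions to write $Z'_{DT}(Y_S;q)_\beta = Z'_{DT}(\mathcal{X}_{\bf 0}; q)_\beta$ where $\mathcal{X}_{\bf 0} = \tilde{Y}_S \cup_{D_1} W$. Then I would apply (\ref{degeneration-formula}) to expand this into a sum over splittings $\beta = \beta_1 + \beta_2$ with $\beta_1 \in H_2(\tilde{Y}_S)$, $\beta_2 \in H_2(W)$, and cohomology-weighted partitions $\eta$ of $|\eta| = \beta_1 \cdot D_1 = \beta_2 \cdot D_1$. The crucial claim is that the only splitting contributing is $\beta_1 = p!(\beta)$, $\beta_2 = 0$, $\eta = \emptyset$. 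For the compatibility condition (\ref{comp}): since $\beta \cdot F_{p_0} = 0$ and the proper transform $p!(\beta)$ has intersection zero with the exceptional divisor $D_1$, a dimension/positivity argument (curves in $W$ mapping to the local Calabi-Yau geometry must be contracted or have controlled contact) forces $|\eta| = 0$. With $\eta = \emptyset$ the relative invariant of $W/D_1$ in class $\beta_2$ degenerates to an absolute invariant of $W$; but $W$ is not Calabi-Yau and one must check via Lemma \ref{lem2.1} that the relevant virtual dimensions vanish only for $\beta_2 = 0$, where $Z'_{DT}(W/D_1;q)_{0,\emptyset}$ contributes trivially (equals $1$ after removing degree-zero terms). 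This leaves exactly $Z'_{DT}(\tilde{Y}_S/D_1; q)_{p!(\beta), \emptyset}$, with the prefactor $(-1)^{|\eta|-\ell(\eta)}\mathfrak{z}(\eta)/q^{|\eta|} = 1$.

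The main obstacle I anticipate is rigorously ruling out the splittings with $\beta_2 \neq 0$ or $\eta \neq \emptyset$. This requires a careful analysis of the curve classes in $W$ and $\tilde{Y}_S$ relative to $D_1$: one must show that any effective $\beta_2$ with $\beta_2 \cdot D_1 = |\eta| > 0$ either leads to a moduli space of relative ideal sheaves whose virtual dimension is negative (so the invariant vanishes), or is geometrically impossible because $\beta_1 = \beta - \beta_2$ cannot be represented by a subscheme of $\tilde{Y}_S$ with the required contact order along $D_1$ while still having image class $\beta$ downstairs. Here I would use the explicit structure of $N_{\sigma/\mathbb{F}_0}$ and $N_{\sigma/\mathbb{F}_1}$ described in Section \ref{pc}, together with Lemma \ref{lem2.2} relating the canonical degrees $d, d_1, d_2$ across the degeneration, to pin down which contributions survive. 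Once the vanishing of all extraneous terms is established, the identification of the surviving term with $Z'_{DT}(\tilde{Y}_S/D_1;q)_{p!(\beta),\emptyset}$ is immediate from the definitions, and the equality of reduced partition functions follows after dividing out the degree-zero factors, which match because the Hilbert schemes of points on $Y_S$ and $\tilde{Y}_S$ have the same generating function structure away from the exceptional locus.
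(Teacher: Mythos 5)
Your setup is the same as the paper's: you degenerate $Y_S$ to the normal cone of the fiber $F_{p_0}$ (blow up $Y_S\times{\mathbb C}$ along $F_{p_0}\times\{0\}$), getting central fiber $\tilde{Y}_S\cup_{D_1}X_2$ with $X_2={\mathbb P}_{F_{p_0}}(N_{F_{p_0}/Y_S}\oplus{\mathcal O})$, and you then invoke the degeneration formula (\ref{degeneration-formula}). (Two small slips: since $N_{F_{p_0}/Y_S}\cong{\mathcal O}\oplus{\mathcal O}$, the second component is ${\mathbb P}_{{\mathbb P}^1}({\mathcal O}^{\oplus 3})\cong{\mathbb P}^2\times{\mathbb P}^1$, not a ``${\mathbb P}^1\times{\mathbb P}^1\times{\mathbb P}^1$-like'' space; and ``$\beta\cdot F_{p_0}=0$'' is not a meaningful pairing of two curve classes in a threefold.) The genuine gap is that the entire content of Lemma \ref{lem4.1} is the vanishing of all summands with $\eta\neq\emptyset$ or $\beta_2\neq 0$, and your proposal does not prove it: you assert that ``a dimension/positivity argument \dots forces $|\eta|=0$'' and then explicitly defer this as the main anticipated obstacle. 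Naming Lemmas \ref{lem2.1} and \ref{lem2.2} as the relevant tools is the right instinct, but without carrying out the computation the key claim is unestablished; also, a relative invariant with $\eta=\emptyset$ does not simply ``degenerate to an absolute invariant,'' and no separate matching of degree-zero factors is needed since (\ref{degeneration-formula}) is already stated for reduced partition functions.

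For comparison, here is what actually fills that gap in the paper. If a summand with partition $\eta$ contributes, the insertion $\epsilon^*(C_\eta)$ must saturate the virtual dimension, so by Lemma \ref{lem2.1} one needs $c_1(X_1)\cdot\beta_1=\deg C_\eta\geq 0$. On the other component, formula (\ref{chernclass}) gives $c_1(X_2)=\pi^*{\mathcal O}_{{\mathbb P}^1}(2)-3\xi$, and writing $\beta_2$ as base class plus fiber class, with $-\xi$ the infinity section and the contact condition $(-\xi)\cdot\beta_2^f=|\eta|$ from (\ref{comp}), one gets $c_1(X_2)\cdot\beta_2\geq 3|\eta|$. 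Since $c_1(Y_S)\cdot\beta=0$ for $\beta\in H_2(S,{\mathbb Z})$, Lemma \ref{lem2.2} gives $0=c_1(X_1)\cdot\beta_1+c_1(X_2)\cdot\beta_2-2|\eta|\geq \deg C_\eta+|\eta|$, which is impossible unless $|\eta|=0$. Then $\beta_2\cdot D_1=0$ confines $\beta_2$ to the base ${\mathbb P}^1$-direction of $X_2\cong{\mathbb P}^2\times{\mathbb P}^1$, and for such a nonzero class $c_1(X_2)\cdot\beta_2>0$, so the relative invariant with no insertions vanishes for dimension reasons; hence only $\beta_2=0$, $\beta_1=p!(\beta)$, $\eta=\emptyset$ survives, with prefactor $1$, giving the stated identity. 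Your outline would become a proof once these explicit Chern-class bounds and the resulting inequality are supplied.
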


\begin{proof}
Let $\mathcal X$ be the blow up of $Y_S\times {\mathbb C}$ along
$F_{p_0}\times \{0\}$, where $F_{p_0}$ is the fiber of $Y_S$ over
$p_0$ and let $\pi$ be the natural projection from $\mathcal X$ to
$\mathbb C$. It is a semistable degeneration of $Y_S$ with the
central fiber ${\mathcal X}_0$ being a union of $X_1= \tilde{Y}_S$
and $X_2\cong {\mathbb P}_{{\mathbb P}^1}({\mathcal O}\oplus
{\mathcal O}\oplus {\mathcal O}) \cong {\mathbb P}^2\times
{\mathbb P}^1$ with the common divisor $D_1\cong {\mathbb
P}^1\times {\mathbb P}^1$.

By the degeneration formula (\ref{degeneration-formula}), we may
express the absolute Donaldson-Thomas invariants of $Y_S$ in terms
of the relative Donaldson-Thomas invariants of $(X_1,D_1)$ and
$(X_2,D_1)$ as follows:
\begin{eqnarray}
   Z'_{DT}(Y_S;q)_\beta &=& \sum_{\eta, \beta_1+\beta_2
   =\beta}Z'_{DT}(Y_S/D_1;q)_{\beta_1,\eta}\\
   & & \times \frac{(-1)^{|\eta| - \ell(\eta)}{\mathfrak
   z}(\eta)}{q^{|\eta|}}Z'_{DT}(X_2/D_1;q)_{\beta_2,\eta^\vee}\nonumber
\end{eqnarray}
where the summation runs over the splittings $\beta_1+\beta_2 =
\beta$ and the cohomology weighted partitions $\eta$.

Now we need to compute the summands in the right hand side of the
degeneration formula. For this we have the following claim:

{\bf Claim:} There are only terms with $\beta_2=0$.

In fact, if $|\eta|\not= 0$, then $\beta_2\not= 0$ because
$\beta_2\cdot D_1 = |\eta|$. By Lemma \ref{lem2.1}, we have
$$
  C_1(X_1)\cdot \beta_1 = \mbox{vdim} I_n(X_1/D_1; \beta_1) = \deg
  \epsilon^*_1(C_\eta),
$$
where $C_1(X_1)$ denotes the first Chern class of $X_1$ and
$\epsilon_1: I_n(X_1/D_1,\beta_1) \rightarrow
\mbox{Hilb}(D_1,|\eta|)$ is the canonical intersection map.

Let $V$ be a complex rank $r$ vector bundle over a complex
manifold $M$, and $\pi: {\mathbb P}(V)\longrightarrow M$ be the
corresponding projective bundle. Let $\xi_V$ be the first Chern
class of the tautological bundle in ${\mathbb P}(V)$. A simple
calculation shows
\begin{equation}\label{chernclass}
              C_1({\mathbb P}(V)) = \pi^* C_1(M) + \pi^*C_1(V) -
              r\xi_V.
\end{equation}

Applying (\ref{chernclass}) to $X_2$, we obtain
$$
      C_1(X_2) = \pi^*{\mathcal O}_{{\mathbb P}^1}(2) - 3\xi,
$$
where $\xi$ is the first Chern class of the tautological bundle in
$X_2$. Since the homology class $\beta_2$ may be decomposed into
the sum of the base class $\beta_2^{{\mathbb P}^1}$ and the fiber
class $\beta_2^f$, so we have
\begin{eqnarray*}
   C_1(X_2)\cdot \beta_2 &=& \mbox{vdim} I_n(X_2/D_1,\beta_2)\\
    & =& \pi^*{\mathcal O}_{{\mathbb P}^1}(2)\cdot\beta_2^{{\mathbb
   P}^1} - 3\xi\cdot \beta_2^f \geq 3|\eta|.
\end{eqnarray*}

In the last inequality, we use the fact that $-\xi$ is the
infinite section, so $-\xi\cdot\beta_1^f = |\eta|$ and
$\pi^*{\mathcal O}_{{\mathbb P}^1}(2)\beta_2^{{\mathbb P}^1} =
{\mathcal O}_{{\mathbb P}^1}(2)\cdot\pi_*\beta_2^{{\mathbb
P}^1}\geq 0$. Since $\beta\in H_2(S,{\mathbb Z})$, from
(\ref{chernclass}), we have
$$
    c_1(Y_S)\cdot\beta =\mbox{vdim} I_n(X,\beta) =0.
$$

From Lemma \ref{lem2.2}, we have
$$
   C_1(Y_S)\cdot\beta = c_1(X_1)\cdot\beta_1 +
   C_1(X_2)\cdot\beta_2 - 2|\eta|.
$$
Therefore, we obtian
$$
     \deg C_\eta + |\eta|>0.
$$
This is a contradiction. Therefore $|\eta|=0$. So the claim is
proved.

Thus $\beta_2\cdot D_1 =0$. Since $D_1$ is the hyperplane in
$X_2\cong {\mathbb P}^3$, we must have $\beta_2 =0$. Also we have
$\beta_1= p!(\beta)$.

By the degeneration formula, we have
\begin{equation*}
  Z'_{DT}(Y_S;q)_\beta = Z'_{DT}(\tilde{Y}_S/D_1;q)_{p!(\beta),\emptyset}.
\end{equation*}
This proves the lemma.
\end{proof}

\begin{lemma}\label{lem4.2}
Under the assumption of Lemma \ref{lem4.1}, Then for $\beta\in
H_2(S;{\mathbb Z})$, we have
$$
      Z'_{DT}(\tilde{Y_S};q)_{p!(\beta)} =
      Z'_{DT}(\tilde{Y_S}/D_1;
      q)_{p!(\beta), \emptyset}
$$
\end{lemma}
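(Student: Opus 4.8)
The plan is to run the same kind of degeneration argument that proved Lemma \ref{lem4.1}, but now applied to $X_1 = \tilde Y_S$ and its relative divisor $D_1 \cong \mathbb{P}^1\times\mathbb{P}^1$. That is, I would degenerate $\tilde Y_S$ to the normal cone of $D_1$: let $\mathcal{X}'$ be the blowup of $\tilde Y_S\times\mathbb{C}$ along $D_1\times\{0\}$, so that the central fiber is $\tilde Y_S\cup_{D_1} W$, where $W = \mathbb{P}(N_{D_1/\tilde Y_S}\oplus\mathcal{O})$ is the projective completion of the normal bundle of $D_1$ in $\tilde Y_S$. Since $D_1$ is the exceptional divisor of the blowup of $Y_S$ along a $\mathbb{P}^1$, its normal bundle restricted to $D_1\cong \mathbb{P}^1\times\mathbb{P}^1$ is $\mathcal{O}(0,-1)$ (trivial on the exceptional $\mathbb{P}^1$-direction, degree $-1$ on the base), so $W$ is an explicit projective bundle over $\mathbb{P}^1\times\mathbb{P}^1$ whose Chern class I can write down via (\ref{chernclass}). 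Applying the degeneration formula (\ref{degeneration-formula}) expresses $Z'_{DT}(\tilde Y_S;q)_{p!(\beta)}$ as a sum over splittings $p!(\beta)=\beta_1'+\beta_2'$ and weighted partitions $\mu$ of products of relative invariants of $(\tilde Y_S,D_1)$ and $(W,D_1)$.

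The main step is then the vanishing argument: I claim only the term with $\beta_2'=0$, $\mu=\emptyset$ survives, which immediately gives $Z'_{DT}(\tilde Y_S;q)_{p!(\beta)} = Z'_{DT}(\tilde Y_S/D_1;q)_{p!(\beta),\emptyset}\cdot(\text{the }\beta_2'=0\text{ factor})$, and the $\beta_2'=0$ relative invariant of $(W,D_1)$ with trivial partition is $1$. To prove the claim I would compute, exactly as in Lemma \ref{lem4.1}, the virtual dimensions: on the $\tilde Y_S$ side $\mathrm{vdim}\,I_n(\tilde Y_S/D_1,\beta_1') = c_1(\tilde Y_S)\cdot\beta_1' = \deg\epsilon^*(C_\mu)$, on the $W$ side $\mathrm{vdim}\,I_n(W/D_1,\beta_2') = c_1(W)\cdot\beta_2'$, and then combine them using Lemma \ref{lem2.2} together with the fact (from (\ref{chernclass}), since $\beta = p_*p!(\beta)$ has $c_1(Y_S)\cdot\beta=0$ and blowup changes $c_1$ in a controlled way) that $c_1(\tilde Y_S)\cdot p!(\beta)$ is nonpositive — or more precisely that $c_1(\tilde Y_S)\cdot p!(\beta) = c_1(Y_S)\cdot\beta = 0$ because $p!(\beta)$ has zero intersection with the exceptional divisor $D_1$. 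The key inequality to establish is $c_1(W)\cdot\beta_2' > 2|\mu|$ whenever $\beta_2'\neq 0$ meets the fibers of $W\to D_1$ nontrivially, using that the "infinity section'' direction contributes $|\mu|$ and the coefficient $3$ in $c_1(W) = \pi^*c_1(D_1\times\text{(something)})\cdots - 3\xi$ coming from the rank of the bundle defining $W$; combined with $c_1(\tilde Y_S)\cdot\beta_1'\le 0$ and the degeneration relation this forces $\deg C_\mu + |\mu| > 0$, contradicting $\deg C_\mu = c_1(\tilde Y_S)\cdot\beta_1' \le 0$ unless $|\mu|=0$. Once $|\mu|=0$, the compatibility condition (\ref{comp}) forces $\beta_2'\cdot D_1 = 0$, and since $D_1$ is relatively ample on $W$ (being the infinity divisor of a projective bundle completion, plus pulled-back ampleness from the base) this forces $\beta_2'$ to be a fiber class or $0$; a small additional dimension check on fiber classes rules those out, leaving only $\beta_2'=0$, hence $\beta_1'=p!(\beta)$.

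The hardest part will be pinning down the precise Chern class of $W$ and the geometry of curve classes in it — specifically making sure the numerical coefficient in front of the tautological class is large enough (here $3$, since $N_{D_1/\tilde Y_S}\oplus\mathcal{O}$ has rank $2$ but one also sees the fiber $\mathbb{P}^1$ factor of $D_1$, so the relevant projective bundle has rank $3$) that the inequality $c_1(W)\cdot\beta_2'\ge 3|\mu|$ goes through, and checking that $D_1$'s class really does kill $p!(\beta)$ so that $c_1(\tilde Y_S)\cdot p!(\beta)=0$ rather than merely $\le 0$. If the positivity is tight rather than strict one must instead argue directly that the would-be contributing moduli spaces are empty or that the relevant relative invariants vanish by a rigidity/localization argument on $W$; but I expect the numerics to work out strictly, exactly paralleling the computation in Lemma \ref{lem4.1}, so that the degeneration formula collapses to the single claimed term.
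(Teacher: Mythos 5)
Your overall strategy is the same as the paper's: degenerate $\tilde Y_S$ to the normal cone of $D_1$, apply the degeneration formula (\ref{degeneration-formula}), and kill every term with $\eta\neq\emptyset$ or $\beta_2\neq 0$ by the dimension count coming from Lemmas \ref{lem2.1} and \ref{lem2.2}. The genuine gap is in your key inequality. The bubble $X_2=W=\mathbb{P}_{D_1}(N_{D_1}\oplus\mathcal{O})$ is a $\mathbb{P}^1$-bundle over the surface $D_1$, i.e.\ the projectivization of a rank-$2$ bundle, so (\ref{chernclass}) gives $c_1(W)=\pi^*c_1(D_1)+\pi^*c_1(N_{D_1})-2\xi$, not $-3\xi$; your parenthetical claim that one ``also sees the fiber $\mathbb{P}^1$ of $D_1$, so the relevant projective bundle has rank $3$'' is false ($W\cong\mathbb{P}^1\times\mathbb{F}_1$ is not a projectivized rank-$3$ bundle over $\mathbb{P}^1$ --- that was the geometry of Lemma \ref{lem4.1}, where $X_2\cong\mathbb{P}^2\times\mathbb{P}^1$). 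With the correct Chern class, since $\pi^*(c_1(D_1)+c_1(N_{D_1}))$ is only nef, the naive bound is $c_1(W)\cdot\beta_2\geq 2|\mu|$, and it is tight: for $\beta_2=mL$ a multiple of the fiber class of $W\to D_1$ one has $|\mu|=m$ and $c_1(W)\cdot\beta_2=2|\mu|$, so the identity from Lemma \ref{lem2.2} only yields $\deg C_\mu=0$, which does not force $|\mu|=0$ (identity-weighted partitions have degree zero). Hence the contradiction you want does not follow from the inequality you state, and your fallback (``rigidity/localization on $W$'') is precisely the missing content, not a detail.

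The paper closes exactly this gap by a finer curve-class analysis on $X_2$: it applies (\ref{chernclass}) twice, to $W\to D_1$ and to $D_1\to F_{p_0}$, uses $c_1(N_{D_1})=\xi_1$ (the tautological class of $D_1=\mathbb{P}(N_{F_{p_0}|Y_S})$) to get $c_1(X_2)=\pi^*c_1(F_{p_0})-\xi_1-2\xi$, decomposes $\beta_2=\beta_2^{F_{p_0}}+aL+be$, and combines $(-\xi)\cdot\beta_2=|\eta|$ with the geometric constraint $D_1\cdot\beta_2=0$ (coming from $p!(\beta)\cdot D_1=0$) to force $a=b=|\eta|$, whence $c_1(X_2)\cdot\beta_2\geq 4|\eta|$ and the contradiction $\deg C_\eta+2|\eta|>0$. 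Two smaller points: your assertion that $c_1(\tilde Y_S)\cdot\beta_1'\le 0$ is unjustified; what is actually used is $c_1(X_1)\cdot\beta_1=\deg\epsilon^*(C_\eta)\ge 0$ for a nonvanishing contribution together with $c_1(\tilde Y_S)\cdot p!(\beta)=0$. And in the final step, once $|\mu|=0$ the classes to exclude are not fiber classes of $W\to D_1$ (these already meet the relative divisor), but classes in the $F_{p_0}$-direction of the zero section; the paper rules them out because then $c_1(X_2)\cdot\beta_2>0$ while the partition is empty, so the relative invariant vanishes for dimension reasons, giving $\beta_2=0$.
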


\begin{proof}

   Let $\mathcal X$ be the blow up of $\tilde{Y}_S\times {\mathbb C}$ along
$D_1\times \{0\}$. Let $\pi: {\mathcal X}\longrightarrow {\mathbb
C}$ be the natural projection. Thus we get a semi-stable
degeneration of $\tilde{Y}_S$ whose central fiber is a union of
$X_1\cong \tilde{Y}_S$ and $X_2= {\mathbb P}_{{\mathbb
P}^1}(N_{D_1}\oplus {\mathcal O})$, where the normal bundle of the
divisor $D_1$ is $N_{D_1}= {\mathcal O}(-1,-1)$.

By the degeneration formula (\ref{degeneration-formula}), we may
express the absolute Donaldson-Thomas invariants of $\tilde{Y}_S$
in terms of the relative Donaldson-Thomas invariants of
$(X_1,D_1)$ and $(X_2,D_1)$ as follows:
\begin{eqnarray}\label{4-2-1}
   Z'_{DT}(\tilde{Y}_S;q)_{p!(\beta)} &=& \sum_{\eta, \beta_1+\beta_2
   =\beta}Z'_{DT}(\tilde{Y}_S/D_1;q)_{\beta_1,\eta}\\
   & & \times \frac{(-1)^{|\eta| - \ell(\eta)}{\mathfrak
   z}(\eta)}{q^{|\eta|}}Z'_{DT}(X_2/D_1;q)_{\beta_2,\eta^\vee}\nonumber
\end{eqnarray}
where the summation runs over the splittings $\beta_1+\beta_2 =
\beta$ and the cohomology weighted partitions $\eta$.

 Similar to the proof of Lemma \ref{lem4.1}, we need to prove that there are only terms with
$\beta_2=0$ in the right hand side of the degeneration formula.

In fact, Note that $X_2 = {\mathbb P}_{D_1}(N_{D_1}\oplus
{\mathcal O})$ and $D_1={\mathbb P}_{{\mathbb P}^1}({\mathcal
O}\oplus {\mathcal O})$. Denote by $F_{p_0}\cong {\mathbb P}^1$
the fiber of $Y_S$ at the point $p_0$. Applying (\ref{chernclass})
to $X_2$ and $D_1$, we obtain
\begin{eqnarray*}
               C_1(X_2) &=& \pi^*C_1(D_1) + \pi^*C_1(N_{D_1}) - 2\xi\\
               &=& \pi^*C_1(F_{p_0}) + \pi^*C_1(N_{F_{p_0}|Y_S})- 2\xi_1 +\pi^*C_1(N_{D_1})
               - 2\xi,
\end{eqnarray*}
where $\xi_1$ and $\xi$ are the first Chern classes of the
tautological bundles in ${\mathbb P}(N_{F_{p_0}|Y_S})$ and
${\mathbb P}(N_{D_1}\oplus {\mathcal O})$ respectively. Here we
denote the Chern class and its pullback by the same symbol.  It is
well-known that the normal bundle to $D_1$ in $\tilde{Y}_S$ is
just the tautological line bundle on $D_1\cong {\mathbb
P}(N_{F_{p_0}|Y_S})$. Therefore $C_1(N_{D_1}) = \xi_1$. So we have
$$
    C_1(X_2) = \pi^*C_1(F_{p_0}) - \xi_1 - 2\xi.
$$
where $-\xi$ is the infinite section which has positive
intersections with the effective curve classes.

 Note that $X_2$ is a projective bundle over $D_1$ with fiber ${\mathbb
 P}^1$.  Let $L$ be the class of a line in the fiber ${\mathbb
 P}^1$ and $e$ be the class of a line in the fiber ${\mathbb P}^1$
 in $D_1= {\mathbb P}(N_{F_{p_0}|Y_S})$. Denote by $\beta_2^{F_{p_0}}$ the
 homology class of the projection in $F_{p_0}$ of the curve component.
 Denote by $\beta_2^f$ the difference of $\beta_2$ and
 $\beta_2^{F_{p_0}}$, i. e. $\beta_2^f = \beta_2-\beta^{F_{p_0}}$. Then
 it is easy to know $\beta_2^f = aL +be$. Since $(-\xi)\cdot
 \beta_2 = |\eta|$  and
 $(-\xi)\cdot \beta_2^f = a = |\eta|$. On the other hand,
 since all curves of class $\beta_2$ come from the curve of class $p!(\beta)$
 by the degeneration and the degeneration only happens away from
 the divisor $D_1$, from $p!(\beta)\cdot D_1 =0$, we have
 $D_1\cdot \beta_2 =0$. Thus we have $D_1\cdot \beta_2^f = a-b =0$.
 Therefore, we have $a=b=|\eta|$. So we have $\beta_2^f = |\eta|(L+e)$.
 Since $C_1(F_{p_0}) + C_1(N_{F_{p_0}|Y_S})=C_1(F_{p_0})\geq 0$, we have
 $$
      C_1(X_2)\cdot\beta_2 \geq  4|\eta|.
 $$

 From Lemma \ref{lem2.2}, we have
 $$
   C_1(\tilde{Y}_S)\cdot p!(\beta) = C_1(X_1)\cdot\beta_1 +
   C_1(X_2)\cdot\beta_2 -2|\eta|.
 $$
 Therefore,
 $$
  \deg C_\eta + 2|\eta| >0.
 $$
 This is a contradiction. Thus $|\eta| =0$.

 Therefore, from the discussion above, we have  $\beta_2=
 \beta^{F_{p_0}}$. So $C_1(X_2)\cdot \beta_2 =
 C_1(F_{p_0})(\beta^{F_{p_0}})$. Thus $C_1(X_2)\cdot \beta_2 >0 $
 if $\beta^{F_{p_0}}\not= 0$. Furthermore, if $\beta_2= \beta^{F_{p_0}}\not=
 0$, then, by definition, we have
 $$
    Z'_{DT}(X_2/D_1; q)_{\beta_2,\emptyset} = 0.
 $$
 Therefore, we have proved that there are only terms with
 $\beta_2=0$ in the right hand side of (\ref{4-2-1}).

 By the degeneration formula, we have
 \begin{equation*}
  Z'_{DT}(\tilde{Y}_S;q)_{p!(\beta)} =
  Z'_{DT}(\tilde{Y}_S/D_1;q)_{p!(\beta),\emptyset}.
 \end{equation*}
 This proves the lemma.

\end{proof}

Summarizing Lemma \ref{lem4.1} and \ref{lem4.2}, we have

\begin{theorem}
 $$
    Z'_{DT}(Y_S;q)_\beta = Z'_{DT}(\tilde{Y_S}; q)_{p!(\beta)}.
 $$
\end{theorem}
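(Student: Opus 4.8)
The plan is to deduce the theorem directly from Lemmas \ref{lem4.1} and \ref{lem4.2}, which together isolate a single relative Donaldson--Thomas partition function as a common value. Concretely, Lemma \ref{lem4.1} identifies the absolute reduced partition function $Z'_{DT}(Y_S;q)_\beta$ with the relative one $Z'_{DT}(\tilde Y_S/D_1;q)_{p!(\beta),\emptyset}$, while Lemma \ref{lem4.2} identifies $Z'_{DT}(\tilde Y_S;q)_{p!(\beta)}$ with the \emph{same} relative partition function $Z'_{DT}(\tilde Y_S/D_1;q)_{p!(\beta),\emptyset}$. Transitivity of equality then yields $Z'_{DT}(Y_S;q)_\beta = Z'_{DT}(\tilde Y_S;q)_{p!(\beta)}$, which is the assertion.

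The one point that genuinely needs to be checked before chaining the two lemmas is that the relative geometry appearing on the right-hand side of each lemma is literally the same: in both cases it is the pair $(\tilde Y_S, D_1)$ with $D_1 \cong {\mathbb P}^1\times{\mathbb P}^1$ the exceptional divisor of the blowup $\tilde Y_S \to Y_S$ along $F_{p_0}$, the curve class is $p!(\beta)\in H_2(\tilde Y_S,{\mathbb Z})$, and the boundary condition is the empty cohomology-weighted partition $\emptyset$ (consistent with the compatibility condition (\ref{comp}) together with $p!(\beta)\cdot D_1 = 0$). Since all of this data matches, $Z'_{DT}(\tilde Y_S/D_1;q)_{p!(\beta),\emptyset}$ denotes one and the same power series in $q$ in the two lemmas, and no further argument is required.

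It is worth recording where the real work sits: in each of Lemmas \ref{lem4.1} and \ref{lem4.2} one applies the degeneration formula (\ref{degeneration-formula}) to an explicit semistable degeneration (of $Y_S$, resp.\ of $\tilde Y_S$) whose central fiber has a projective-bundle second component $X_2$, and one shows by a virtual-dimension and effectivity estimate --- using Lemmas \ref{lem2.1} and \ref{lem2.2} together with the Chern-class identity (\ref{chernclass}) --- that any term of the sum with $\beta_2 \neq 0$ would force $\deg C_\eta + c|\eta| > 0$ for a positive constant $c$, a contradiction, so only the $\beta_2 = 0$, $\eta = \emptyset$ term survives. Given those two lemmas, the theorem itself carries no additional analytic content; the only ``obstacle'' is the bookkeeping verification in the previous paragraph that the two relative invariants coincide as objects. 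A one-shot alternative --- constructing a single semistable degeneration directly relating $Y_S$ to $\tilde Y_S$ --- seems less convenient, since $\tilde Y_S$ does not obviously appear as a component of a semistable degeneration of $Y_S$; routing the comparison through the common relative geometry $(\tilde Y_S, D_1)$ is what makes it clean.
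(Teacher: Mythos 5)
Your proposal is correct and matches the paper exactly: the paper states this theorem as a direct summary of Lemmas \ref{lem4.1} and \ref{lem4.2}, chaining the two identifications with the common relative partition function $Z'_{DT}(\tilde{Y}_S/D_1;q)_{p!(\beta),\emptyset}$ just as you do. Your additional bookkeeping check that the relative geometry, curve class, and empty boundary partition agree in the two lemmas is the right (and only) point to verify.
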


Next, we want to compare the Donaldson-Thomas invariants
$Z'_{DT}(\tilde{Y}_S;q)_{p!(\beta)}$ of $\tilde{Y}_S$ to the
Donaldson-Thomas invariants of $Z$. In fact, we have

\begin{theorem}\label{thm-4-4}
$$
  Z'_{DT}(\tilde{Y}_S;q)_{p!(\beta)} = Z'_{DT}(Z;q)_{p!(\beta)}.
$$
\end{theorem}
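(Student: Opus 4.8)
The plan is to mimic the degeneration argument used in Lemmas \ref{lem4.1} and \ref{lem4.2}, applied to the blowup $Z\to \tilde{Y}_S$ along the section $\sigma$ of the exceptional divisor $D_1$. Recall from Section \ref{pc} that $Z$ is obtained from $\tilde Y_S$ by blowing up the section $\sigma\subset D_1$ corresponding to ${\mathcal O}\hookrightarrow{\mathcal O}\oplus K_S$, so $\sigma\cong E\cong {\mathbb P}^1$ with normal bundle $N_{\sigma|D_1}\cong{\mathcal O}$ and $N_{\sigma|\tilde Y_S}\cong{\mathcal O}\oplus{\mathcal O}$ (since $\sigma$ sits in the fiber $D_1={\mathbb P}^1\times{\mathbb P}^1$ over $E$ trivially). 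First I would construct the semistable degeneration: let $\mathcal X$ be the blowup of $\tilde Y_S\times{\mathbb C}$ along $\sigma\times\{0\}$, with central fiber ${\mathcal X}_0 = X_1\cup_{D} X_2$ where $X_1\cong Z$ (the blowup of $\tilde Y_S$ along $\sigma$), $X_2\cong {\mathbb P}_{\sigma}(N_{\sigma|\tilde Y_S}\oplus{\mathcal O})\cong{\mathbb P}^1\times{\mathbb P}^2$, and the common divisor $D$ is the exceptional divisor, a ${\mathbb P}^1$-bundle over $\sigma$ that is a hyperplane in the ${\mathbb P}^2$-fibers of $X_2$. Then apply the degeneration formula \eqref{degeneration-formula} to write $Z'_{DT}(\tilde Y_S;q)_{p!(\beta)}$ as a sum over splittings $\beta_1+\beta_2 = p!(\beta)$ and cohomology weighted partitions $\eta$ of relative invariants of $(X_1,D)$ times $(X_2,D)$.

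The key step, exactly as in Lemma \ref{lem4.2}, is the claim that only the term with $\beta_2 = 0$, $\eta = \emptyset$ survives. To prove it I would run the same virtual-dimension obstruction: on the $X_2 = {\mathbb P}^1\times{\mathbb P}^2$ side, using \eqref{chernclass}, $C_1(X_2) = \pi^*{\mathcal O}_{{\mathbb P}^1}(2) - 3\xi$ where $-\xi$ is the class of the infinite section and $\xi$ restricts to the hyperplane class on the ${\mathbb P}^2$-fiber; since $D$ is that hyperplane, $\beta_2\cdot D = |\eta|$ forces $\xi\cdot\beta_2^f \geq |\eta|$, and combined with the base contribution one gets $C_1(X_2)\cdot\beta_2 \geq 3|\eta|$ when $\beta_2\neq 0$. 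On the $X_1 = Z$ side, Lemma \ref{lem2.1} gives $C_1(X_1)\cdot\beta_1 = \deg\epsilon_1^*(C_\eta) = \deg C_\eta$, and the degeneration relation from Lemma \ref{lem2.2} reads $C_1(\tilde Y_S)\cdot p!(\beta) = C_1(X_1)\cdot\beta_1 + C_1(X_2)\cdot\beta_2 - 2|\eta|$. But $C_1(\tilde Y_S)\cdot p!(\beta) = C_1(Y_S)\cdot\beta - (\text{correction along }D_1)$; since $p!(\beta)\cdot D_1 = 0$ and $c_1(Y_S)\cdot\beta = 0$, one gets $C_1(\tilde Y_S)\cdot p!(\beta) = 0$. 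Substituting yields $\deg C_\eta + |\eta| > 0$ whenever $|\eta|\neq 0$ (or $\beta_2\neq 0$), which is impossible since $\deg C_\eta \geq 0$ and the descendent integral would have to vanish for dimension reasons otherwise. Hence $|\eta| = 0$, so $\beta_2\cdot D = 0$ and, $D$ being a hyperplane class in the ${\mathbb P}^2$-fiber direction, $\beta_2$ is supported in the ${\mathbb P}^1$ base of $X_2$; but that ${\mathbb P}^1$ is $\sigma$, and such a class would have to come from a curve in $\tilde Y_S$ meeting $\sigma$, which since $p!(\beta)\cdot(\text{the fiber over }\sigma)$ considerations force $\beta_2 = 0$ — alternatively, if $\beta_2\neq 0$ is a multiple of the base line then $C_1(X_2)\cdot\beta_2 > 0$ and $Z'_{DT}(X_2/D;q)_{\beta_2,\emptyset} = 0$ by definition. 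So only $\beta_2 = 0$, $\eta=\emptyset$ contributes, forcing $\beta_1 = p!(\beta)$ (viewed now in $H_2(Z)$ via the strict transform), and the degeneration formula collapses to
\begin{equation*}
  Z'_{DT}(\tilde Y_S;q)_{p!(\beta)} = Z'_{DT}(Z/D;q)_{p!(\beta),\emptyset}.
\end{equation*}

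It remains to identify $Z'_{DT}(Z/D;q)_{p!(\beta),\emptyset}$ with the absolute invariant $Z'_{DT}(Z;q)_{p!(\beta)}$. For this I would use a second application of the degeneration formula, degenerating $Z$ itself along $D$ in the standard way (blowing up $Z\times{\mathbb C}$ along $D\times\{0\}$, with central fiber $Z\cup_D {\mathbb P}_D(N_D\oplus{\mathcal O})$), and rerun the same dimension count to kill all terms with nontrivial contact order; since $p!(\beta)\cdot D = 0$ to begin with, the compatibility condition \eqref{comp} already forces $\eta = \emptyset$, and the argument that the ${\mathbb P}^1$-bundle factor contributes nothing when $\beta_2\neq 0$ is the now-familiar vanishing. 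This gives $Z'_{DT}(Z/D;q)_{p!(\beta),\emptyset} = Z'_{DT}(Z;q)_{p!(\beta)}$ and completes the proof.

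The main obstacle I anticipate is bookkeeping the homology classes correctly under the two blowups: one must track how $\beta$, $p!(\beta)$, and their strict transforms sit in $H_2(Y_S)$, $H_2(\tilde Y_S)$, and $H_2(Z)$, and verify that the intersection numbers $p!(\beta)\cdot D_1 = 0$ and $p!(\beta)\cdot D = 0$ genuinely hold (the latter because $D$ lies over $\sigma\subset E$ and $p!(\beta)$ is pulled back from $S$, hence disjoint from the exceptional locus). Getting the normal bundle $N_{\sigma|\tilde Y_S}\cong{\mathcal O}\oplus{\mathcal O}$ right — so that $X_2\cong{\mathbb P}^1\times{\mathbb P}^2$ and the Chern-class computation parallels Lemma \ref{lem4.1} rather than Lemma \ref{lem4.2} — is the delicate geometric input; everything else is a repetition of the two dimension-counting arguments already established.
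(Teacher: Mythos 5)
Your overall strategy is the paper's: degenerate $\tilde{Y}_S$ by blowing up $\tilde{Y}_S\times\mathbb{C}$ along $\sigma\times\{0\}$, kill all terms with $|\eta|\neq 0$ or $\beta_2\neq 0$ by the dimension count of Lemmas \ref{lem2.1} and \ref{lem2.2}, and then trade the resulting relative invariant of $Z$ for the absolute one by a second degeneration of $Z$ along the exceptional divisor. However, the geometric input you yourself single out as ``the delicate point'' is wrong: $N_{\sigma|\tilde{Y}_S}$ is \emph{not} $\mathcal{O}\oplus\mathcal{O}$. Indeed $\sigma=E\times\{\mathrm{pt}\}$ inside $D_1\cong E\times F_{p_0}$ gives $N_{\sigma|D_1}\cong\mathcal{O}$, but $D_1$ is the preimage of $E$ under $\tilde{Y}_S\to\tilde{S}$, so $N_{D_1|\tilde{Y}_S}$ is pulled back from $N_{E|\tilde{S}}=\mathcal{O}(-1)$ and restricts to $\mathcal{O}_{\mathbb{P}^1}(-1)$ on $\sigma$; hence $N_{\sigma|\tilde{Y}_S}\cong\mathcal{O}(-1)\oplus\mathcal{O}$. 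This is also what Section \ref{pc} of the paper records: the exceptional divisor $D_2$ of $Z\to\tilde{Y}_S$ is $\mathbb{F}_1$ with $N_{\sigma|\mathbb{F}_1}\cong\mathcal{O}(-1)$, not $\mathbb{F}_0$. Consequently in your first degeneration $X_2=\mathbb{P}_\sigma(\mathcal{O}(-1)\oplus\mathcal{O}\oplus\mathcal{O})$ rather than $\mathbb{P}^1\times\mathbb{P}^2$, and the common divisor is $\mathbb{F}_1=\mathbb{P}_\sigma(\mathcal{O}(-1)\oplus\mathcal{O})$, not a trivial $\mathbb{P}^1\times\mathbb{P}^1$. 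The first dimension count happens to survive this correction (one still gets $C_1(X_2)=\pi^*C_1(\mathcal{O}_\sigma(1))-3\xi$ and $C_1(X_2)\cdot\beta_2\geq 3|\eta|$), but your second degeneration is then set up over the wrong surface with the wrong normal bundle, so the Chern class computation you propose to ``rerun'' is not the one that actually occurs; with the correct geometry the paper gets $C_1(X_2)=\pi^*C_1(\mathcal{O}_\sigma(1))-\xi_1-2\xi$ and a bound $C_1(X_2)\cdot\beta_2\geq 4|\eta|$, parallel to Lemma \ref{lem4.2}, not Lemma \ref{lem4.1}.

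There is a second genuine gap in your final step: the assertion that because $p!(\beta)\cdot D=0$ the compatibility condition \eqref{comp} ``already forces $\eta=\emptyset$'' does not follow. The condition \eqref{comp} says $|\eta|=\beta_1\cdot D=\beta_2\cdot D$ for the split classes $\beta_1+\beta_2=p!(\beta)$, and $\beta_1$ need not equal $p!(\beta)$, so $p!(\beta)\cdot D=0$ alone does not kill the terms with $|\eta|>0$; that vanishing must come from the virtual-dimension argument (which you also gesture at, so this is repairable, but as stated the shortcut is invalid). Moreover, after $|\eta|=0$ one must still exclude nonzero $\beta_2$ with $\beta_2\cdot D=0$, i.e.\ classes supported over $\sigma$; the paper does this by showing $C_1(X_2)\cdot\beta_2>0$ for such classes, whence $Z'_{DT}(X_2/D;q)_{\beta_2,\emptyset}=0$ for dimension reasons. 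So your plan needs both the normal bundle corrected to $\mathcal{O}(-1)\oplus\mathcal{O}$ (with the attendant change in $X_2$ and in the common divisor) and the $\eta=\emptyset$, $\beta_2=0$ reductions in the second degeneration carried out by the dimension count rather than by the compatibility condition.
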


\begin{proof}
 In $\tilde{Y}_S$, take a section $\sigma$ of the exceptional
 divisor $D_1$ over the old exceptional divisor $E$, then $\sigma \cong {\mathbb
 P}^1$ and the normal bundle to $\sigma$ in $\tilde{Y}_S$ is
 $N_{\sigma|\tilde{Y}_S} = {\mathcal O}_{{\mathbb P}^1}(-1)\oplus{\mathcal O}$.
 Let $\mathcal X$ be the blow up of $\tilde{Y}_S\times {\mathbb
 C}$ along $\sigma\times \{0\}$. Let $\pi : {\mathcal X}\longrightarrow {\mathbb
 C}$ be the natural projection. Thus we get a semi-stable
 degeneration of $\tilde{Y}_S$ whose central fiber is a union of
 $X_1 \cong Z$ and $X_2 = {\mathbb P}_\sigma ({\mathcal O}_{{\mathbb P}^1}(-1)\oplus {\mathcal O}\oplus {\mathcal
 O})$ with the Hirzebruch surface ${\mathbb F}_1= {\mathbb P}_\sigma({\mathcal O}(-1)\oplus {\mathcal
 O})$ as the common divisor.

    By the degeneration formula (\ref{degeneration-formula}), we may
express the absolute Donaldson-Thomas invariants of $\tilde{Y}_S$
in terms of the relative Donaldson-Thomas invariants of
$(X_1,{\mathbb F}_1)$ and $(X_2,{\mathbb F}_1)$ as follows:
\begin{eqnarray}\label{4-4-1}
   Z'_{DT}(\tilde{Y}_S;q)_{p!(\beta)} &=& \sum_{\eta, \beta_1+\beta_2
   =\beta}Z'_{DT}(Z/{\mathbb F}_1;q)_{\beta_1,\eta}\\
   & & \times \frac{(-1)^{|\eta| - \ell(\eta)}{\mathfrak
   z}(\eta)}{q^{|\eta|}}Z'_{DT}(X_2/{\mathbb F}_1;q)_{\beta_2,\eta^\vee}\nonumber
\end{eqnarray}
where the summation runs over the splittings $\beta_1+\beta_2 =
p!(\beta)$ and the cohomology weighted partitions $\eta$.

Similar to the proof of Lemma \ref{lem4.1}, we need to prove that
there are only terms with $\beta_2=0$ in the right hand side of
(\ref{4-4-1}).

Note that $X_2={\mathbb P}_\sigma ({\mathcal O}_{{\mathbb
P}^1}(-1)\oplus {\mathcal O}\oplus {\mathcal O})$ and ${\mathbb
F}_1= {\mathbb P}_\sigma({\mathcal O}(-1)\oplus {\mathcal
 O})$. Therefore, from (\ref{chernclass}), we have
 $$
     C_1(X_2) = \pi^*C_1({\mathcal O}_\sigma (1)) - 3\xi
 $$
where $\xi$ is the first Chern class of the tautological line
bundle over $X_2$. It is easy to see that
$$
   C_1(X_2)\cdot \beta_2 \geq 3|\eta|.
$$

From Lemma \ref{lem2.2}, we have
$$
   C_1(\tilde{Y}_S)\cdot p!(\beta)=C_1(Z)\cdot \beta_1 +
   C_1(X_2)\cdot \beta_2 - 2|\eta|.
$$
Therefore,
$$
    C_1(\tilde{Y}_S)\cdot p!(\beta)\geq \deg C_\eta + |\eta|>0.
$$
Since $C_1(\tilde{Y}_S)\cdot p!(\beta) =0$, so this is a
contradiction. Thus $|\eta| =0$.

The same argument as in the proof of Lemma \ref{lem4.2} shows that
$\beta_2=0$. Therefore, by the degeneration formula, we have
\begin{equation}\label{4-4-2}
Z'_{DT}(\tilde{Y}_S;q)_{p!(\beta)} = Z'_{DT}(Z/{\mathbb F}_1;
q)_{p!(\beta),\emptyset}.
\end{equation}

Now it remains to prove
$$
    Z'_{DT}(Z;q)_{p!(\beta)} = Z'_{DT}(Z/{\mathbb
    F}_1;q)_{p!(\beta),\emptyset}.
$$
To prove this, we degenerate $Z$ along the exceptional divisor
${\mathbb F}_1$. Then we obtain two smooth $3$-folds
$$
   X_1 = Z, \hspace{2cm} X_2 = {\mathbb P}_{{\mathbb F}_1}(N_{{\mathbb F}_1}\oplus {\mathcal
   O}),
$$
intersecting along the exceptional divisor ${\mathbb F}_1$ in $Z$
and the infinite section of the ${\mathbb P}^1$-bundle $X_2$.

Applying the degeneration formula to $Z'_{DT}(Z;q)_{p!(\beta)}$,
we have
\begin{eqnarray}\label{4-4-3}
   Z'_{DT}(Z;q)_{p!(\beta)} &=& \sum_{\eta, \beta_1+\beta_2
   =\beta}Z'_{DT}(Z/{\mathbb F}_1;q)_{\beta_1,\eta}\\
   & & \times \frac{(-1)^{|\eta| - \ell(\eta)}{\mathfrak
   z}(\eta)}{q^{|\eta|}}Z'_{DT}(X_2/{\mathbb F}_1;q)_{\beta_2,\eta^\vee}\nonumber
\end{eqnarray}
where the summation runs over the splittings $\beta_1+\beta_2 =
p!(\beta)$ and the cohomology weighted partitions $\eta$.

Note that $X_2 = {\mathbb P}_{{\mathbb F}_1}(N_{{\mathbb
F}_1}\oplus {\mathcal O})$ and ${\mathbb F}_1 = {\mathbb P}_\sigma
({\mathcal O}(-1)\oplus {\mathcal O})$. Applying
(\ref{chernclass}) to $X_2$ and ${\mathbb F}_1$, we obtain
\begin{eqnarray*}
   C_1(X_2) &=& \pi^*C_1({\mathbb F}_1) + \pi^* C_1(N_{{\mathbb
   F}_1}) - 2\xi\\
   &=& \pi^*C_1({\mathcal O}_\sigma (1)) - \xi_1 -2\xi,
\end{eqnarray*}
where $\xi_1$ and $\xi$ are the first Chern classes of the
tautological bundles in ${\mathbb P}_\sigma({\mathcal O}(-1)\oplus
{\mathcal O})$ and ${\mathbb P}(N_{{\mathbb F}_1}\oplus {\mathcal
O})$ respectively. Here we denote the Chern class and its pullback
by the same symbol. The same calculation as in the proof of Lemma
\ref{lem4.2} shows that
$$
   C_1(X_2)\cdot \beta_2 \geq 4|\eta|.
$$

 From Lemma \ref{lem2.2}, we have
 $$
   C_1(Z)\cdot p!(\beta) = C_1(X_1)\cdot\beta_1 +
   C_1(X_2)\cdot\beta_2 -2|\eta|.
 $$
 Therefore, if $|\eta|\not= 0$, then
 $$
  \deg C_\eta + 2|\eta| >0.
 $$
 This is a contradiction because $C_1(Z)\cdot p!(\beta) =0$. Thus $|\eta| =0$.

 The same argument shows that $\beta_2= \beta_\sigma$, i. e. the class of a curve in $\sigma$.
 So $C_1(X_2)\cdot \beta_2 = C_1({\mathcal O}_\sigma(1))(\beta_\sigma)$. Thus $C_1(X_2)\cdot \beta_2 >0 $
 if $\beta_\sigma\not= 0$. Furthermore, if $\beta_2= \beta_\sigma\not=
 0$, then, by definition, we have
 $$
    Z'_{DT}(X_2/D_1; q)_{\beta_2,\emptyset} = 0.
 $$
 Therefore, we have proved that there are only terms with
 $\beta_2=0$ in the right hand side of (\ref{4-4-3}).

 By the degeneration formula, we have
 \begin{equation*}
  Z'_{DT}(Z;q)_{p!(\beta)} =
  Z'_{DT}(Z/{\mathbb F}_1;q)_{p!(\beta),\emptyset}.
 \end{equation*}
 This proves the lemma.

\end{proof}

Finally, we want to prove the following theorem

\begin{theorem}\label{thm-4-5}
$$
        Z'_{DT}(Y_{\tilde{S}};q)_{p!(\beta)} =
    Z'_{DT}(Z;q)_{p!(\beta)}.
$$
\end{theorem}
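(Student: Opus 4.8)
The plan is to run the argument of Theorem~\ref{thm-4-4} with $\tilde Y_S$ replaced by $Y_{\tilde S}$, the section $\sigma$ by the section $\sigma_1\subset Y_{\tilde S}|_E$ with $\sigma_1^2=-1$ constructed in Section~\ref{pc}, and $\mathbb F_1$ by the exceptional divisor $D_1\cong\mathbb F_0$ of the blowup $Z\to Y_{\tilde S}$ (recall from Section~\ref{pc} that $Z$ is this blowup). First I would compute $N_{\sigma_1|Y_{\tilde S}}$: from the normal bundle sequence for $\sigma_1\subset Y_{\tilde S}|_E\subset Y_{\tilde S}$, in which both $N_{\sigma_1|(Y_{\tilde S}|_E)}$ and $N_{(Y_{\tilde S}|_E)|Y_{\tilde S}}|_{\sigma_1}$ are $\mathcal O_{\mathbb P^1}(-1)$ and the extension splits on $\mathbb P^1$, one gets $N_{\sigma_1|Y_{\tilde S}}\cong\mathcal O_{\mathbb P^1}(-1)\oplus\mathcal O_{\mathbb P^1}(-1)$, in agreement with $D_1\cong\mathbb P(N_{\sigma_1|Y_{\tilde S}})\cong\mathbb P^1\times\mathbb P^1$.

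For the first degeneration let $\mathcal X$ be the blowup of $Y_{\tilde S}\times\mathbb C$ along $\sigma_1\times\{0\}$, with central fiber $X_1\cong Z$ glued along $D_1$ to $X_2=\mathbb P_{\sigma_1}(N_{\sigma_1|Y_{\tilde S}}\oplus\mathcal O)\cong\mathbb P_{\mathbb P^1}(\mathcal O(-1)\oplus\mathcal O(-1)\oplus\mathcal O)$, the common divisor being $D_1=\mathbb P_{\sigma_1}(N_{\sigma_1|Y_{\tilde S}})$. Apply the degeneration formula~(\ref{degeneration-formula}). By~(\ref{chernclass}) one finds $C_1(X_2)=-3\xi$ (the base and $N_{\sigma_1|Y_{\tilde S}}$ first Chern classes cancel), and since $D_1\sim-\xi$ the compatibility condition~(\ref{comp}) gives $C_1(X_2)\cdot\beta_2=3|\eta|$. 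Combined with $C_1(X_1)\cdot\beta_1=\deg\epsilon_1^*(C_\eta)=\deg C_\eta\ge0$ (Lemma~\ref{lem2.1}), with $C_1(Y_{\tilde S})\cdot p!(\beta)=0$ (by~(\ref{chernclass}) $C_1(Y_{\tilde S})=-2\xi$, and $\xi$ vanishes on the zero section, which contains the curves of class $p!(\beta)$), and with Lemma~\ref{lem2.2}, this forces $\deg C_\eta+|\eta|=0$, hence $|\eta|=0$. One then argues $\beta_2=0$: with $|\eta|=0$ the conditions $\beta_2\cdot D_1=0$ and $\beta_1\cdot D_1=0$ force $\beta_2$ to be a multiple of the class of $\sigma_1$ in $X_2$ and $\beta_1$ to be the proper transform of a curve disjoint from $\sigma_1$; since $\sigma_1$ is the restriction to $E$ of the section at infinity of $Y_{\tilde S}$ and the curves of class $p!(\beta)$ lie in the disjoint zero section, the matching $\beta_1+\beta_2=p!(\beta)$ makes $\beta_1$ non-effective in $Z$ whenever $\beta_2\ne0$, so $Z'_{DT}(Z/D_1;q)_{\beta_1,\emptyset}=0$. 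Only the term $\beta_2=0$, $\eta=\emptyset$ survives, whence
\[
Z'_{DT}(Y_{\tilde S};q)_{p!(\beta)}=Z'_{DT}(Z/D_1;q)_{p!(\beta),\emptyset}.
\]

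For the second degeneration, degenerate $Z$ along $D_1$, so that $X_1\cong Z$ is glued along $D_1$ to $X_2=\mathbb P_{D_1}(N_{D_1|Z}\oplus\mathcal O)$ with $N_{D_1|Z}$ the tautological line bundle of $D_1=\mathbb P(N_{\sigma_1|Y_{\tilde S}})$. The same type of Chern-class estimate as in the proof of Lemma~\ref{lem4.2}, together with $C_1(Z)\cdot p!(\beta)=0$ and Lemma~\ref{lem2.2}, forces $|\eta|=0$, and then the same vanishing argument discards every splitting with $\beta_2\ne0$, yielding $Z'_{DT}(Z;q)_{p!(\beta)}=Z'_{DT}(Z/D_1;q)_{p!(\beta),\emptyset}$. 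Comparing the two displayed identities proves the theorem.

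The step I expect to be the main obstacle is precisely the removal of the $\beta_2\ne0$ contributions once $|\eta|=0$. In Theorem~\ref{thm-4-4} (and in Lemma~\ref{lem4.1}) the normal bundle of the blown-up locus carried a trivial summand, so $C_1(X_2)$ had an extra positive term, the residual classes had strictly positive virtual dimension, and their $\emptyset$-invariants vanished for dimension reasons. Here $N_{\sigma_1|Y_{\tilde S}}$ is balanced, $C_1(X_2)=-3\xi$ carries no such term, and the residual classes $b[\sigma_1]$ have virtual dimension zero; one therefore cannot conclude by a dimension count and must genuinely exploit the disjointness of $\sigma_1$ from the zero section, together with the constraint $\beta_1\cdot D_1=0$, to see that the matching partner $\beta_1$ fails to be effective in $Z$. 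Making this effectivity argument airtight (for an arbitrary smooth surface $S$) is the delicate point of the proof.
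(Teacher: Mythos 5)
Your overall skeleton coincides with the paper's: the same two semistable degenerations (blow up $Y_{\tilde S}\times\mathbb C$ along $\sigma_1\times\{0\}$, then blow up $Z\times\mathbb C$ along $\mathbb F_0\times\{0\}$), the same identification $X_2=\mathbb P_{\sigma_1}(\mathcal O(-1)\oplus\mathcal O(-1)\oplus\mathcal O)$ with common divisor $\mathbb F_0=\mathbb P(N_{\sigma_1|Y_{\tilde S}})$, and the same Chern-class estimates $C_1(X_2)\cdot\beta_2=3|\eta|$ (resp.\ $4|\eta|$) combined with Lemma \ref{lem2.2} and $C_1(Y_{\tilde S})\cdot p!(\beta)=C_1(Z)\cdot p!(\beta)=0$ to force $\eta=\emptyset$. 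You also correctly flag that, unlike in Lemma \ref{lem4.2} and Theorem \ref{thm-4-4}, the residual classes here have virtual dimension zero, so the $\beta_2\neq 0$ terms cannot be discarded by a dimension count on the $X_2$ side (indeed $X_2/\mathbb F_0$ in classes $b[\sigma_1]$ is the resolved-conifold geometry, whose relative invariants with empty partition do not vanish).

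The gap is in how you then remove the $\beta_2\neq 0$ terms. Your argument rests on the claim that $\sigma_1$ is the restriction to $E$ of the section at infinity of $Y_{\tilde S}$ and is therefore disjoint from the zero section containing the curves of class $p!(\beta)$. This is false, and it contradicts your own normal-bundle computation: in $Y_{\tilde S}|_E=\mathbb P(K_{\tilde S}|_E\oplus\mathcal O)=\mathbb P(\mathcal O_E(-1)\oplus\mathcal O)$ the infinity-section copy of $E$ has self-intersection $+1$ (blowing it up would give exceptional divisor $\mathbb F_2$, not $\mathbb F_0$), while the section with $\sigma_1^2=-1$ is the copy of $E$ sitting inside the zero section $\tilde S\subset Y_{\tilde S}$; only for that section is $N_{\sigma_1|(Y_{\tilde S}|_E)}\cong\mathcal O(-1)$ and $N_{\sigma_1|Y_{\tilde S}}\cong\mathcal O(-1)\oplus\mathcal O(-1)$, as you computed. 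Consequently there is no disjointness: curves representing $p!(\beta)$ lie in the zero section, which contains $\sigma_1=E$, and they may meet $E$ or even contain it as a component ($p!(\beta)\cdot E=0$ is only a homological statement), so your ``$\beta_1$ is non-effective in $Z$ whenever $\beta_2\neq0$'' step has no justification as written. The paper instead disposes of these terms by homological bookkeeping: with $\eta=\emptyset$, $\beta_2$ is a multiple of $[\sigma_1]$, and since $\sigma_1$ and the old exceptional divisor $E$ have the same class in $H_2(Y_{\tilde S})$, the matching condition (\ref{betasum}) for the splitting of $p!(\beta)$ forces $\beta_2=0$ (and in the second degeneration it uses $\mathbb F_0\cdot p!(\beta)=0$ in the same way). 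So you located the delicate point correctly, but the mechanism you propose for it is based on a wrong identification of $\sigma_1$ and would need to be replaced by the class-matching argument (or a correct substitute) to close the proof.
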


\begin{proof}
Take a section $\sigma_1\cong {\mathbb P}^1$ of
$Y_{\tilde{S}}\mid_E= {\mathbb F}_1$ such that $\sigma_1^2 = -1$.
Then we degenerate $Y_{ \tilde{S}}$ along the section $\sigma_1$
and obtain two $3$-folds, see Section 3,
$$
 X_1 = Z, \hspace{2cm} X_2 = {\mathbb P}_{\sigma_1}({\mathcal O}(-1)\oplus {\mathcal O}(-1)\oplus {\mathcal
 O}),
$$
with the common divisor ${\mathbb F}_0= {\mathbb
P}_{\sigma_1}({\mathcal O}(-1)\oplus {\mathcal O}(-1))$.

Applying the degeneration formula to
$Z'_{DT}(Y_{\tilde{S}};q)_{p!(\beta)}$, we have
\begin{eqnarray}\label{4-5-1}
   Z'_{DT}(Y_{\tilde{S}};q)_{p!(\beta)} &=& \sum_{\eta, \beta_1+\beta_2
   =\beta}Z'_{DT}(Z/{\mathbb F}_0;q)_{\beta_1,\eta}\\
   & & \times \frac{(-1)^{|\eta| - \ell(\eta)}{\mathfrak
   z}(\eta)}{q^{|\eta|}}Z'_{DT}(X_2/{\mathbb F}_0;q)_{\beta_2,\eta^\vee}\nonumber
\end{eqnarray}
where the summation runs over the splittings $\beta_1+\beta_2 =
p!(\beta)$ and the cohomology weighted partitions $\eta$.

Similar to the proof of Lemma \ref{lem4.1}, we need to prove that
the summand with nonzero contribution in the right hand side of
(\ref{4-5-1}) must have the trivial partition $\eta = \emptyset$.

Note that $X_2 = {\mathbb P}_{\sigma_1}({\mathcal O}(-1)\oplus
{\mathcal O}(-1)\oplus {\mathcal O})$. From (\ref{chernclass}), it
is easy to know
$$
     C_1(X_2) = \pi^*C_1(\sigma_1) + \pi^*C_1({\mathcal
     O}(-1)\oplus {\mathcal O}(-1)) - 3 \xi = -3\xi,
$$
where $\xi$ is the first Chern class of the tautological line
bundle over $X_2$. Therefore, we have
$$
   C_1(X_2)\cdot \beta_2 = 3|\eta|.
$$

From Lemma \ref{lem2.2}, we have
$$
   C_1(Y_{\tilde{S}})\cdot p!(\beta) = C_1(X_1)\cdot \beta_1 +
   C_1(X_2)\cdot \beta_2 -2|\eta|.
$$
Therefore, if $|\eta|\not= 0$, then
$$
C_1(Y_{\tilde{S}})\cdot p!(\beta) = \deg C_\eta + |\eta| >0.
$$
This is a contradiction because $C_1(Y_{\tilde{S}})\cdot p!(\beta)
=0$. This means that the summand with nonzero contribution in the
right hand side of (\ref{4-5-1}) must have $\eta=\emptyset$.

Since the section $\sigma_1$ and the old exceptional divisor $E$
have the same homology class in $Y_{\tilde{S}}$, so from
(\ref{betasum}) and $\eta=\emptyset$, we have $\beta_2=0$.

Therefore, by the degeneration formula, we have
 \begin{equation*}
  Z'_{DT}(Y_{\tilde{S}};q)_{p!(\beta)} =
  Z'_{DT}(Z/{\mathbb F}_0;q)_{p!(\beta),\emptyset}.
 \end{equation*}

Now it remains to prove
 \begin{equation}\label{4-5-2}
  Z'_{DT}(Z;q)_{p!(\beta)} =
  Z'_{DT}(Z/{\mathbb F}_0;q)_{p!(\beta),\emptyset}.
 \end{equation}

To prove this, we degenerate $Z$ along the exceptional divisor
${\mathbb F}_0$. Then we obtain two $3$-folds
$$
     X_1 = Z, \hspace{2cm} X_2 = {\mathbb P}_{{\mathbb F}_0}(N_{{\mathbb F}_0}\oplus {\mathcal
     O}).
$$

Note that $X_2 = {\mathbb P}_{{\mathbb F}_0}(N_{{\mathbb
F}_0}\oplus {\mathcal O})$. Applying (\ref{chernclass}) to $X_2$
and ${\mathbb F}_0$, we have
\begin{eqnarray*}
C_1(X_2) &=& \pi^*C_1({\mathbb F}_0) + \pi^*C_1(N_{{\mathbb F}_0}
- 2\xi \\
& = & \pi^*C_1(\sigma_1) + \pi^*C_1({\mathcal O}(-1)\oplus
{\mathcal O}(-1)) -2\xi_1 +\pi^*C_1(N_{{\mathbb F}_0})-2\xi\\
&=& -\xi_1-2\xi,
\end{eqnarray*}
where $\xi_1$ and $\xi$ are the first Chern classes of the
tautological bundles in ${\mathbb P}_{\sigma_1}({\mathcal
O}(-1)\oplus {\mathcal O}(-1))$ and ${\mathbb P}(N_{{\mathbb
F}_0}\oplus {\mathcal O})$ respectively. The same calculation as
in the proof of Lemma \ref{lem4.2} shows that
$$
     C_1(X_2)\cdot\beta_2 = 4|\eta|.
$$

From Lemma \ref{lem2.2}, we have
\begin{eqnarray*}
   C_1(Z)\cdot p!(\beta) &=& C_1(X_1)\cdot \beta_1 +
   C_1(X_2)\cdot\beta_2 -2|\eta|\\
   &=& \deg C_\eta + 2|\eta| >0.
\end{eqnarray*}
This is a contradiction because $C_1(Z)\cdot p!(\beta)=0$. Thus
$|\eta|=0$.

Since ${\mathbb F}_0\cdot p!(\beta) =0$, the same argument as
above shows that $\beta_2=0$. As before, this implies
(\ref{4-5-2}). This comletes the proof of the theorem.
\end{proof}

\begin{remark}
From Theorem \ref{thm-4-4} and \ref{thm-4-5}, it is easy to know
that Theorem \ref{thm-1-1} holds.
\end{remark}

\end{document}